\theoremstyle{plain}
\DeclareMathOperator{\pgcd}{\operatorname{pgcd}}
\DeclareMathOperator{\p}{\mathbb{P}_{\mathbb{Q}}^2}
\DeclareMathOperator{\h}{H}
\DeclareMathOperator{\e}{\textbf{e}}
\newtheorem{theo}{Théorème}[section]
	\newtheorem{lemme}[theo]{Lemme}
	\newtheorem{propo}[theo]{Proposition}
	\newtheorem{cory}[theo]{Corollaire}
	\newtheorem{rmk}[theo]{Remarque}
	\newtheorem{defi}[theo]{Définition}
\newcommand\aess{\alpha_{\operatorname{ess}}}
\title{Distribution locale des points rationnels de hauteur bornée sur une surface de del Pezzo de degré 6}
\author{Zhizhong Huang\\
	\multicolumn{1}{p{.7\textwidth}}{\centering{\textit{Institut Fourier}\\ \textit{Université Grenoble Alpes}\\ \texttt{zhizhong.huang@univ-grenoble-alpes.fr}}}}
\date{}
\begin{document}
	\maketitle

\begin{abstract}
Nous établissons une mesure décrivant de manière précise la distribution locale asymptotique des points rationnels 
en dehors des sous-variétés localement accumulatrice
autour d'un point rationnel général sur une surface de del Pezzo de degré 6 au sens de l'approximation diophantienne.
\end{abstract}
 \section{Introduction}

 L'étude des points de hauteur bornée sur les variétés algébriques 
 reste un sujet majeur de la géométrie diophantienne. Le principe 
 de Manin et Batyrev prédit une formule asymptotique du nombre total de ces points
 et la distribution globale attendue de ces points est donné par le principe d'équidistribution formulé dans \cite{Peyre2},
 mais elle ne révèle pas les aspects d'approximation diophantienne mis en évidence dans les travaux de 
 McKinnon et Roth (\cite{Mc} et \cite{M-R}).
 Dans cet article, inspiré par les résultats connus, nous proposons un cadre
 pour étudier la distribution locale en un point général dans une variété algébrique (cela a été considéré en premier dans 
 un travail non publié de Pagelot \cite{Pagelot}). 
 Il ressort de notre étude l'existence, pour certaines surfaces toriques au moins, d'une mesure de distribution locale limite pour
 le facteur de \og zoom\fg\ critique.
 Cette mesure et l'ordre de grandeur du terme principal reflètent certaines propriétés géométriques de la variété.
 
  \subsection{Notations et définitions}
 \subsubsection{Énoncé du problème}
 
 Une surface $X_3$ de del Pezzo de degré 6 définie sur le corps des nombres rationnels est isomorphe sur $\mathbb{C}$ à 
 l'éclatement de  $\mathbb{P}^2$ en trois points en position générale.
 Dans le cas où ces points sont rationnels, nous pouvons nous ramener au cas où les trois points sont donnés en coordonnées homogènes par $P_1=[1:0:0]$, $P_2=[0:1:0]$, $P_3=[0:0:1]$.
On vérifie alors que $X_3$ est isomorphe à une sous-variété de $\mathbb{P}^1\times\mathbb{P}^1\times\mathbb{P}^1$ définie par l'équation $u_1 u_2 u_3=v_1 v_2 v_3$, où $[u_i,v_i]$ est la coordonnée homogène du $i$-ème $\mathbb{P}^1$.
On note $\omega_X^{-1}$ le fibré en droites anticanonique de $X_3$. Avec ce plongement $\omega_X^{-1}$ est isomorphe au tiré en arrière du fibré $\mathcal{O}(1,1,1)$.
On note $\pi:X_3\rightarrow \mathbb{P}^2$ le morphisme d'éclatement et $E_1,E_2,E_3$ les diviseurs exceptionnels.
Puisque $\pi$ est un isomorphisme en dehors $E_1\cup E_2\cup E_3$, 
on abrège souvent $\pi^{-1}([\alpha:\beta:\gamma])$ en $[\alpha:\beta:\gamma]$ pour $\alpha\beta\gamma\neq 0$.

Pour tout point $P=\pi^{-1}([x:y:z])$ qui n'appartient pas aux diviseurs exceptionnels, on utilise la hauteur de $P$ calculée par la formule suivante:
$$H_{\omega_X^{-1}}(\pi^{-1}([x:y:z]))=\frac{\max(|x|,|y|)\max(|y|,|z|)\max(|x|,|z|)}{\operatorname{pgcd}(x,y)\operatorname{pgcd}(y,z)\operatorname{pgcd}(x,z)}.$$
L'application $H=H_{\omega_X^{-1}}$ est une hauteur de Weil absolue associée à $\omega_X^{-1}$.
Comme les autres surfaces de del Pezzo de degré $\geqslant 7$, la formule de la hauteur 
(associée au fibré en droites anticanonique ample) reflète le nombre de points éclatés.

Le groupe $G=\mathbb{G}_m^3/\mathbb{G}_m$ agit sur $X_3$ de la manière suivante
$$(\lambda,\mu,\delta)\times [x:y:z]\longmapsto [\lambda x: \mu y: \delta z]$$ 
Pour $\alpha \beta \gamma \neq 0$, il existe un élément de $G$ qui envoie $[\alpha:\beta:\gamma]$ sur $Q=[1:1:1]$.
On peut donc se ramener pour notre étude au point neutre $Q$.

Il y a trois courbes $\{l_i\}_{i=1}^3$ passant par $Q$, définies par les transformations strictes des trois droites dans $\mathbb{P}^2$
d'équations respectives:
$$ x=y,\quad y=z,\quad x=z.$$
On les appelle sous-variétés localement accumulatrices en $Q$.
Nous verrons qu'elles contiennent des points rationnels \og très proches\fg\  dans un voisinage de $Q$.
On note $Z=\cup_i l_i$.
Sur $X_3\diagdown Z$, les points rationnels s'approchent plus lentement de $Q$ que ceux dans $Z$ (en un sens que nous allons préciser),
mais ils ne se concentrent pas sur les sous-variétés strictes. 
On s'intéresse à la manière dont ils sont distribués.

Une façon d'attaquer ce problème est de définir une suite de mesures et de trouver son comportement asymptotique (comme indiqué dans \cite{Pagelot}).
Après avoir fixé un voisinage $U$ assez petit de $Q$ et fixé un difféomorphisme local $\rho$ de la surface en $Q$ sur le plan tangent 
$T_Q X_3$ qui envoie $Q$ sur l'origine,
on considère la famille de mesures $\{\varrho_{B,Q}\}_B$ définies par
\begin{equation}
 \varrho_{Q,B}=\sum_{x\in U(\mathbb{Q}),H(x)\leqslant B}\delta_{B^{\frac{1}{\aess(Q)}}\rho(x)}
\end{equation}
où $\delta_P$ désigne la mesure de Dirac en point $P$
et $\aess(Q)$ est la constante d'approximation essentielle de $Q$ (cf. Définition \ref{de:app}).
Intuitivement, en multipliant par une puissance de $B$, on fait un \og zoom\fg\  sur l'image de $U(\mathbb{Q})$ dans l'espace tangent de $Q$ en fonction de la hauteur.
Dans plusieurs d'exemples la quantité totale $N(U,B)=\varrho_{B,Q}(\chi_U)$ a un terme principal de la forme $ B^a (\log B)^b$ où 
$\chi(U)$ désigne la fonction caractéristique du voisinage compact $U$ de $Q$ dans $T_Q X_3$. 
Ce terme signifie une \og grandeur\fg\ des points rationnels autour de $Q$ atteignant la constante d'approximation essentielle.
On va aussi étudier la convergence de la suite au sens usuel, c'est-à-dire la convergence de la suite réelle $\{N(U,B)^{-1}\int f \varrho_{B,Q}\} $ 
pour une fonction $f$ continue à support compact. Pour calculer la limite il suffit de 
diviser l'intégrale par le terme principal au lieu de $N(U,B)$ lui-même.
Si cette suite de mesures converge, la mesure limite $\varrho_{Q}$ donne asymptotiquement \og la densité des points rationnels tendant vers $Q$\fg. On appelle $\varrho$ \textit{mesure asymptotique de la distribution locale} ou en bref \textit{mesure asymptotique} au point $Q$. Plus précisément, 
$$\int f\operatorname{d}\varrho_{B,Q}\sim B^a(\log B)^b\int f d\varrho_{Q}.$$
On retrouvera l'analogue (sur les variétés) du principe fondamental de l'approximation diophantienne:
nous mesurons la qualité de l'approximation en fonction de la hauteur.

La puissance de \og zoom \fg\ est en fait \textit{a posteriori} une conséquence du calcul de la constante d'approximation essentielle $\aess(Q)$. 
On va voir que les droites passant par $Q$ ont une constante d'approximation exactement égale à $\aess(Q)$.
C'est un fait empirique que les courbes rationnelles dans une variété projective uni-réglée contiennent les points rationnels 
plus proche d'un point rationnel fixé (comme conjecturé par McKinnon dans \cite{Mc} (Conjecture 2.7)). 

On fixe quelques notations pour une utilisation ultérieure. La lettre $p$ désigne un nombre premier.
 Rappelons qu'une fonction $f:\mathbb{N}\rightarrow \mathbb{C}$ est dite \textit{multiplicative} si elle vérifie les conditions suivantes:
 $$f(1)=1,~ f(mn)=f(m)f(n) ~ \mbox{si}  ~ \pgcd(m,n)=1.$$
 On note $\mu$ la fonction de Möbius, $\tau$ la fonction donnant le nombre total de diviseurs, $\omega$ la fonction donnant le nombre total de facteurs premiers, $\varphi$ la fonction d'indicatrice d'Euler et on définit la fonction $\phi$ par
 \begin{equation}
  \phi(n)=\prod_{p|n}\left(1-\frac{1}{p}\right).
 \end{equation}
 On a $\varphi(n)=n\phi(n)$.
\subsubsection{La distance projective}
On introduit une fonction de distance pour $X_3$. Elle est déjà définie pour toute place d'un corps de nombre dans (\cite{M-R} \S2).
Pour $x=[x_0:\cdots,x_n],y=[y_0:\cdots:y_n]\in\mathbb{P}^n(\mathbb{R})$, la distance projective naturelle (associée à la place infinie) 
$d_\infty(x,y)$ est définie par
\begin{equation}\label{eq:dist}
 d_\infty(x,y)=\frac{\sum_{0\leqslant i\neq j\leqslant n}|x_i y_j-x_j y_i|}{\sqrt{\sum_{i=0}^n |x_i|^2}\sqrt{\sum_{i=0}^n |y_i|^2}}.
\end{equation}
Pour une sous-variété projective $Y$ de $\mathbb{R}^n$, \textit{la distance projective naturelle} sur $Y$ est obtenue par restriction.
Les distances induites par des immersions différentes sont équivalentes dans un voisinage compact de $Q$ (\cite{M-R} Lemma 2.4).
On appelle une fonction $d:Y(\mathbb{R})\times Y(\mathbb{R})\mapsto \mathbb{R}_{\geqslant 0}$ \textit{une distance projective} si elle équivaut à une distance projective naturelle sur $Y$.

Dans notre cas on définit la fonction $d:X_3(\mathbb{Q})\times X_3(\mathbb{Q})\to \mathbb{R}$ de la façon suivante.
On plonge $X_3$ dans $\mathbb{P}^1\times \mathbb{P}^1\times \mathbb{P}^1$.
Pour $W=[s_1:t_1]\times[s_2:t_2]\times[s_3:t_3]$ et $V=[x_1:y_1]\times[x_2:y_2]\times[x_3:y_3]$, on définit
\begin{align*}
 d(W,V)&=d_{12}(W,V)+d_{23}(W,V)+d_{13}(W,V)\\
&=\frac{|s_1 y_1-t_1 x_1|}{\sqrt{s_1^2+t_1^2}\sqrt{x_1^2+y_1^2}}+
\frac{|s_2 y_2-t_2 x_2|}{\sqrt{s_2^2+t_2^2}\sqrt{x_2^2+y_2^2}}+
\frac{|s_3 y_1-t_3 x_1|}{\sqrt{s_3^2+t_3^2}\sqrt{x_3^2+y_3^2}}.
\end{align*}
Par un calcul de routine on voit que $d$ équivaut à la distance projective naturelle sur $X_3$ induite par l'immersion de Segre.
Donc $d$ est une distance projective. La raison pour laquelle on choisit cette distance est que l'on peut déduire une borne inférieure pour le produit de $d$ avec certaine puissance de la hauteur anticanonique (Voir la Section 2).
\subsubsection{Les constantes d'approximation}
Plusieurs constantes d'approximation sont définies dans (\cite{M-R} Définition 2.7~2.10 et \cite{Pagelot}). 
Nous les rappelons ici.

Soient $X$ une variété projective lisse et $V$ une sous-variété (ouverte ou fermée) de $X$, $L$ un fibré en droites ample et $Q\in V(\mathbb{\bar{Q}})$. Soit $H_L$ une hauteur de Weil associée à $L$.
On considère les ensembles 

\begin{equation}
 A(Q,V)=\{\gamma>0: \exists C>0 ,\exists (y_i)\subset V(\mathbb{Q}), d(Q,y_i)\to 0, d(Q,y_i)^\gamma H_L(y_i)<C\},
 \end{equation}
 \begin{equation}
 B(Q,V)=\{\gamma>0: \exists C>0,d(Q,y)\geqslant C H_{L}(y)^{-\frac{1}{\gamma}},y\in  V(\mathbb{Q}),y\neq Q\}.
\end{equation}

Il est évident que si $\gamma_0 \in B(Q,V)$, tout $0<\gamma<\gamma_0$ appartient à $B(Q,V)$. 
De même, si $\gamma_0 \in A(Q,V)$, tout $\gamma>\gamma_0$ appartient à $A(Q,V)$. 
\begin{propo}
 \begin{equation}
  \inf A(Q,V)=\sup B(Q,V).
 \end{equation}
\end{propo}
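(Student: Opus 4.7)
Notons $\alpha=\inf A(Q,V)$ et $\beta=\sup B(Q,V)$. Le plan est d'établir séparément les deux inégalités $\alpha\geqslant\beta$ et $\alpha\leqslant\beta$.

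Pour $\alpha\geqslant\beta$, on considérera simultanément un $\gamma_A\in A(Q,V)$, avec sa suite associée $(y_i)\subset V(\mathbb{Q})$ vérifiant $d(Q,y_i)\to 0$ et $d(Q,y_i)^{\gamma_A}H_L(y_i)<C_A$, ainsi qu'un $\gamma_B\in B(Q,V)$ fournissant la minoration $d(Q,y)^{\gamma_B}H_L(y)\geqslant C_B^{\gamma_B}$ pour tout $y\in V(\mathbb{Q})\setminus\{Q\}$. En évaluant cette dernière sur les points $y_i$ et en la combinant avec la majoration de $\gamma_A$, on obtiendra $d(Q,y_i)^{\gamma_A-\gamma_B}\leqslant C_A\,C_B^{-\gamma_B}$. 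Si l'on avait $\gamma_A<\gamma_B$, le membre de gauche tendrait vers $+\infty$ quand $i\to\infty$, ce qui est absurde. On en conclut $\gamma_A\geqslant\gamma_B$ pour tout choix, puis $\alpha\geqslant\beta$.

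Pour $\alpha\leqslant\beta$, on fixera $\gamma>\beta$ et l'on cherchera à montrer que $\gamma\in A(Q,V)$. La négation de $\gamma\in B(Q,V)$ fournira, pour chaque entier $n\geqslant 1$, un point $y_n\in V(\mathbb{Q})\setminus\{Q\}$ tel que $d(Q,y_n)^\gamma H_L(y_n)<n^{-\gamma}$. L'étape cruciale est de garantir que cette suite vérifie $d(Q,y_n)\to 0$, ce qui est exigé par la définition de $A(Q,V)$; cela découlera de la minoration $H_L\geqslant c>0$ (toujours possible pour un fibré ample, quitte à renormaliser), qui entraîne $d(Q,y_n)^\gamma\leqslant c^{-1}n^{-\gamma}\to 0$. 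La suite $(y_n)$ remplira alors toutes les conditions, d'où $\gamma\in A(Q,V)$ et $\alpha\leqslant\gamma$, puis $\alpha\leqslant\beta$ en faisant tendre $\gamma$ vers $\beta^+$.

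Le point le plus délicat se trouvera dans la seconde étape: il faut combiner l'existence abstraite de la suite $(y_n)$, fournie par la négation de la condition définissant $B$, avec une information géométrique (la minoration des hauteurs) pour en déduire la convergence vers $Q$. La première étape est en revanche un simple calcul d'ordres. On notera enfin que le cas dégénéré où aucun point rationnel ne s'approche de $Q$ est trivial, car alors $A(Q,V)=\emptyset$ et $B(Q,V)=(0,+\infty)$, l'égalité ayant lieu dans $\mathbb{R}\cup\{+\infty\}$ avec la convention $\inf\emptyset=+\infty$.
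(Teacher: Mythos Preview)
Your proof is correct and follows essentially the same route as the one the paper references (Proposition 2.11 of McKinnon--Roth): establish the two inequalities separately, the nontrivial direction being that $\gamma\notin B(Q,V)$ forces the existence of a sequence approaching $Q$ witnessing $\gamma\in A(Q,V)$, and for this one uses the ampleness of $L$. The only minor remark is that the paper highlights the Northcott property of $L$, whereas your argument actually requires only the weaker consequence that a Weil height attached to an ample line bundle is bounded below by a positive constant; this is enough to deduce $d(Q,y_n)\to 0$ from $d(Q,y_n)^\gamma H_L(y_n)\to 0$, and your treatment of the degenerate case $A(Q,V)=\varnothing$ is also fine.
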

La démonstration est essentiellement celle de (\cite{M-R}, Proposition 2.11) en remarquant que $L$ est ample et donc vérifie la propriété de Northcott.

\begin{defi}\label{de:app}
 On définit \emph{constante d'approximation} en $Q$ dans $V$ comme $$\alpha(Q,V)=\inf A(Q,V).$$
Alors $A(Q,V)$ est un intervalle contenant $]\alpha(x,V),+\infty[$.
On définit \emph{constante d'approximation essentielle}, en abrégé \emph{constante essentielle}, en $Q$ par la quantité
\begin{equation}
 \aess(Q)=\sup_V \alpha(Q,V)
\end{equation}
où $V$ parcourt toutes les parties ouvertes de $X$ telles que $d(Q,V)=\inf\{d(Q,y):y\in V(\mathbb{Q})\}=0$. 
S'il existe une sous-variété $Z$ de $X$ contenant $Q$ de sorte que pour tout ouvert $W$ de $Z$, $\alpha(Q,W)<\aess(Q)$, on dit que $Z$ est \textit{localement accumulatrice}. Si de plus $\alpha(Q,X\setminus Z)=\aess (Q)$, on dit que $Z$ est la variété localement accumulatrice \textit{maximale}.
\end{defi}
Les deux constantes ne dépendent ni du choix de la fonction de distance, ni du choix de la hauteur associée à $L$.
Intuitivement, plus la constante d'approximation est petite, meilleure est l'approximation.
\begin{theo}[\cite{M-R}, Lemme 2.14, Proposition 2.15]\label{th:3}
	Soit $x\in\mathbb{P}^n(\mathbb{Q})$. Pour une hauteur de Weil associée à $\mathcal{O}(d)$, on a 
	$$\alpha(x,\mathbb{P}^n)=\aess(x)=d.$$ 
\end{theo}

Maintenant revenons à notre cas, à savoir $Q$ désigne le point $[1:1:1]$ dans $X=X_3$ et $L=\omega_X^{-1}$ est le fibré anticanonique.
Pour chaque $l_i$, on a 
$$\omega_X^{-1}|_{l_i}=\mathcal{O}_{l_i} ((\omega_X^{-1}\cdot l_i))=\mathcal{O}_{l_i} (2).$$
Par la fonctorialité de la hauteur (\cite{Serre} 2.3), $H_{\omega_X^{-1}}|_{l_i}$ équivaut à la hauteur associée au fibré en droites $\mathcal{O}(2)$ de $l_i\simeq\mathbb{P}^1$.
Le Théorème \ref{th:3} implique que $\alpha(Q,Z)=2$ (rappelons qu'ici $Z$ est la réunion des trois droites $l_i)$.

Notre premier résultat, démontré dans le \S2, est le suivant.
\begin{theo}\label{th:1}
On a $3\in A(Q,X_3\diagdown Z)$ et $\alpha(Q,X_3\diagdown Z)=3$.
En particulier, $\aess(Q)=3$ et $Z$ est la variété localement accumulatrice maximale.
\end{theo}
Ainsi, pour trouver une partie ouverte de $X_3$ dont la constante d'approximation atteint la constante essentielle,
it faut supprimer toutes les sous-variétés localement accumulatrice, à savoir les trois droites. 
\subsection{Énoncé du théorème principal}
Comme $Z$ est localement accumulatrice, on la supprime et on étudie la distribution locale des points rationnels sur $X_3\diagdown Z$.
Nous pouvons toujours modifier le voisinage $U$ de $Q$ de sorte que tous les points $[x:y:z]$ de $U(\mathbb{Q})$ vérifient
$$x>0,\quad y>0, \quad z>0.$$
Les trois droites définies au début divisent $U$ en six parties (selon les d'ordre induit de $\{x,y,z\}$). 
Les symétries naturelles nous permettent de nous ramener à l'ensemble $R\cap U$ où 
$$R=\{[x:y:z]:x>y>z>0\}.$$ On fixe des coordonnées $(s,t)$ de $T_Q X_3$,
le difféomorphisme local $\rho: U(\mathbb{Q})\to T_Q X_3$ que l'on va utiliser est donné sous ces coordonnées par
\begin{equation}
 [x:y:z]\mapsto \left(\frac{x}{z}-1,\frac{y}{z}-1\right)\in \mathbb{R}^2.
\end{equation}
Ainsi $Q$ est envoyé sur $(0,0)$ et l'image de $R$ dans $\mathbb{R}^2$ par le difféomorphisme $\rho$ ci-dessus
est la région 
$$\bar{R}=\{(s,t)\in \mathbb{R}^2:s>t>0\}.$$
On introduit la distance euclidienne sur $T_Q X_3$ définie par 
\begin{equation}
 d((s,t),(w,z))=\max\left(|s-w|,|t-z|\right).
\end{equation}
Cette distance est équivalente à la distance projective définie précédemment.
\begin{theo}\label{th:2}
 Pour toute fonction $f$ intégrable à support compact dans la région $$\bar{R}=\{(s,t)\in \mathbb{R}^2: s>t>0\},$$ on a pour $B\to\infty$,
 \begin{equation}
  \int f\operatorname{d}\varrho_{B,Q}=B^{\frac{1}{3}}\log B\int f(s,t)\frac{\beta(st(s-t))}{st(s-t)}\operatorname{d}s \operatorname{d} t + O(B^{\frac{1}{3}}\log\log B)
 \end{equation}
où $$\beta(s)=\frac{2}{\pi^2}\sum_{\substack{u,e\\u^3e<s}}u\phi(ue)3^{\omega(e)}\sum_{k|u}\frac{\mu(k)3^{\omega(k)}}{ k}$$
et $\phi,\omega,\mu$ sont des fonctions arithmétiques définies au début. La constante implicite dépend de $f$. 
\end{theo}

La mesure asymptotique a une densité, alors que dans les cas de $\mathbb{P}^2,X_1$ et $X_2$ 
(ici $X_i$ signifie la surface obtenue par éclater $\mathbb{P}^2$ en $i$ points en position générale), 
la mesure asymptotique est concentrée sur les droites, dont la dimension de Hausdorff est $1$
 (\cite{Pagelot}).
De plus, la fonction de densité fait apparaître les trois droites qui sont les variétés localement accumulatrices qu'on a retirées.
On va discuter de ce phénomène à la fin du texte.
\subsection{Les méthodes utilisées pour le comptage}
La preuve du Théorème \ref{th:1} est élémentaire grâce au choix de la distance.
Cette preuve est faite au \S2.
Les détails du comptage forment le \S4.
La difficulté significative en comparant ce type de problème aux problèmes classiques de comptage des points rationnels sur les variétés algébriques est l'apparence de la distance. 
Puisque chaque point rationnel est situé sur une droite unique passant par $Q$ 
(comme précédemment, cela veut dire la transformation stricte des droites dans $\mathbb{P}^2$), 
on introduit une paramétrisation naturelle pour ces droites. 
Cela peut, dans une certaine mesure, simplifier l'expression des pgcd. 
Ensuite on va définir quelques paramètres concernant les pgcd et décomposer l'ensemble considéré suivant ces paramètres grâce à l'observation 
que dans un voisinage borné de $Q$, certains de ces paramètres ne prennent qu'un nombre fini de valeurs qui est indépendant de la borne de la hauteur.
Puis en fixant ces paramètres, on compte sur chaque droite et on somme pour obtenir la quantité totale.
Au cours de cette étape la transformation de Cremona joue un rôle important.
Elle envoie tous les points ayant une \og grande\fg\ pente sur ceux dans l'autre région, dont la pente est
\og petite \fg, si bien que l'on peut les traiter directement.
Enfin pour obtenir une mesure, on va échanger des sommes et des intégrales.

Notre méthode peut être adaptée facilement aux cas $\mathbb{P}^2,X_1$, et $X_2$.
\section{La preuve du Théorème \ref{th:1}}
\subsection{Borne inférieure}
On établit d'abord une borne inférieure pour $\alpha(Q,X)$ et $\aess(Q)$.

On fixe un point $P=\pi^{-1}([x:y:z])\in X_3\smallsetminus Z$. Par symétrie on peut supposer que $x>y>z$.
On introduit les notations:
$$u_1=\frac{x}{\pgcd(x,y)},~~~~u_2=\frac{y}{\pgcd(x,y)},$$
$$v_1=\frac{y}{\pgcd(y,z)},~~~~v_2=\frac{z}{\pgcd(y,z)},$$
$$w_1=\frac{x}{\pgcd(x,z)},~~~~w_2=\frac{z}{\pgcd(x,z)}.$$

Alors on a $u_1>u_2,~v_1>v_2,~w_1>w_2$. Avec ces notations, $H(P)=u_1 v_1 w_1$. En utilisant l'inégalité fondamentale $$\frac{1}{n}(\sum_{i=1}^n a_i)\geqslant \sqrt[n]{\prod_{i=1}^n a_i},$$ on obtient
\begin{align*}
 & d(P,Q)H_{\omega_X^{-1}}(P)^{\frac{1}{3}}\\
 & =\frac{1}{\sqrt{2}}\left(\frac{|u_1-u_2 |}{\sqrt{u_1^2+u_2^2}}+
\frac{|v_1 -v_2|}{\sqrt{v_1^2+v_2^2}}+
\frac{|w_1-w_2|}{\sqrt{w_1^2+w_2^2}}\right) (u_1 v_1 w_1)^{\frac{1}{3}}\\
&\geqslant \frac{1}{2}\left((u_1-u_2)u_1^{-\frac{2}{3}}v_1^{\frac{1}{3}}w_1^{\frac{1}{3}}
+(v_1-v_2)u_1^{\frac{1}{3}}v_1^{-\frac{2}{3}}w_1^{\frac{1}{3}}
+(w_1-w_2)u_1^{\frac{1}{3}}v_1^{\frac{1}{3}}w_1^{-\frac{2}{3}}\right)\\
&\geqslant \frac{3}{2}\sqrt[3]{(u_1-u_2)(v_1-v_2)(w_1-w_2)}\\
&\geqslant \frac{3}{2}
\end{align*}

Cela montre que $3\in A(Q,X_3\diagdown Z)$ et donc on a $\aess(Q)\geqslant \alpha(Q,X_3\diagdown Z)\geqslant 3$.

\subsection{Borne supérieure}

Prenons une droite $D$ différente de $l_i$, on a 
$$\omega_X^{-1}|_D=\mathcal{O}_D ((\omega_X^{-1}\cdot D))=\mathcal{O}_D (3).$$
Et $H_{\omega_X^{-1}}|_D$ équivaut à la hauteur associée au fibré en droites $\mathcal{O}(3)$ de $D\simeq\mathbb{P}^1$.
D'après le Théorème \ref{th:3}, $\alpha(Q,D)=3$. Puisque $\cup_{D\neq l_i} D$ est dense dans $X_3$, pour toute partie ouverte $V$ de $X_3$ de sorte que $d(Q,V)=0$ on a
$\alpha(Q,V)\leqslant 3$, donc $\aess(Q)\leqslant 3$.
\subsection{Une remarque sur un résultat de McKinnon et Roth}

Supposons pour le moment que $X$ est une variété projective définie sur corps algébriquement clos et $L$ est un fibré en droites ample sur $X$.
On note $Q$ un point rationnel de $X$.

Rappelons que \textit{la constante de Seshadri} (pour $L$ et $Q$) est définie par 
$$\varepsilon(Q)=\inf_{Q\in C\subseteq X} \frac{(L,C)}{\operatorname{mult}_Q C}$$
où l'infimum est pris sur toutes les courbes intègres passant par $Q$.

\begin{defi}
 On dit que $\alpha(Q,X)$ \textit{est calculée} sur une sous-variété fermée stricte $Z$ de $X$ si $\exists \varepsilon_0>0$ tel que 
pour toute suite de points rationnels $(P_i)\to Q$ de $X$ vérifiant $\alpha(Q,(P_i))\leqslant \varepsilon_0+\alpha(Q,X)$,
tous sauf un nombre fini d'entre eux se trouvent dans $Z$. Autrement dit, $\alpha(Q,Z)<\alpha(Q,X\setminus Z)$. Dans ce cas là, 
$Z$ est localement accumulatrice.
\end{defi}
Il convient de distinguer les deux notions \og est calculée\fg\ et \og peut être calculée\fg. La première signifie que toute suite de points ayant la constante d'approximation assez petite se trouve uniquement dans $Z$ (ou bien, tout sauf un nombre fini d'éléments), alors que la deuxième dit simplement que l'on peut choisir une suite qui atteint la constante d'approximation donnée dans une sous-variété. Le corollaire suivant donne un exemple sur la différence entre ces notions.

Rappelons un des principaux théorèmes dans (\cite{M-R}). 
Il nous donne une condition suffisante pour que $\alpha$ soit calculée sur une sous-variété fermée stricte de $X$.
En d'autres termes, il fournit un critère de l'existence d'une sous-variété localement accumulatrice.  
\begin{theo}[\cite{M-R}, Théorème 6.2]\label{thm2}
On note $n=\dim X$. Alors $\alpha(Q,X)$ est calculée sur une sous-variété fermée stricte de $X$
pourvu que $\alpha(Q,X)<\frac{n}{n+1}\varepsilon_Q$.
\end{theo}

Maintenant, revenons à notre exemple précédent à savoir $X=X_3$ une surface de del Pezzo de degré 6 et $L$ le fibré anticanonique de $X_3$. 

\begin{cory}
La constante d'approximation $\alpha(Q,X_3)$, qui est égale à $2$, est calculée sur la sous-variété des trois droites $Z$. La constante essentielle $\aess(Q)$ peut être calculée sur toute droite passant par $Q$ différente de celle dans $Z$.
\end{cory}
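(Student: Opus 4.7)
Le plan est de combiner le Théorème \ref{th:3} appliqué droite par droite avec les deux parties du Théorème \ref{th:1}; la seule subtilité consistera à respecter la distinction entre \og est calculée \fg\ et \og peut être calculée \fg\ introduite dans la définition précédente.

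Je commencerais par établir la première assertion. Chaque composante $l_i$ de $Z$ est isomorphe à $\mathbb{P}^1$ et porte $\omega_X^{-1}|_{l_i}=\mathcal{O}_{l_i}(2)$, donc le Théorème \ref{th:3} donne immédiatement $\alpha(Q,l_i)=2$, d'où $\alpha(Q,Z)=2$ et \textit{a fortiori} $\alpha(Q,X_3)\leqslant 2$. Pour obtenir à la fois l'égalité et la partie \og est calculée \fg, j'invoquerais l'autre partie du Théorème \ref{th:1}: puisque $\alpha(Q,X_3\setminus Z)=3$, toute suite $(P_i)\to Q$ possédant une sous-suite infinie dans $X_3\setminus Z$ vérifie $\alpha(Q,(P_i))\geqslant 3$ (sinon la borne $d^\gamma H<C$ avec $\gamma<3$ se restreindrait à cette sous-suite et contredirait $\inf A(Q,X_3\setminus Z)=3$). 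Il suffit alors de choisir n'importe quel $\varepsilon_0\in (0,1)$ pour que toute suite avec $\alpha(Q,(P_i))\leqslant 2+\varepsilon_0<3$ soit contenue dans $Z$ à partir d'un certain rang, ce qui fournit simultanément $\alpha(Q,X_3)=2$ et la condition voulue.

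Pour la seconde assertion, je prendrais une droite $D$ passant par $Q$ différente des $l_i$. Le nombre d'intersection $(\omega_X^{-1}\cdot D)=3$ donne $\omega_X^{-1}|_D\simeq\mathcal{O}_D(3)$, et une nouvelle application du Théorème \ref{th:3} fournit $\alpha(Q,D)=3=\aess(Q)$. Cela produit une suite de points rationnels sur $D$ convergeant vers $Q$ et atteignant exactement $\aess(Q)$, ce qui est précisément le sens de \og $\aess(Q)$ peut être calculée sur $D$ \fg.

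La seule difficulté sera conceptuelle, et non calculatoire: il faudra souligner pourquoi la seconde assertion est seulement formulée avec \og peut être calculée \fg\ et non \og est calculée \fg. Contrairement à l'écart $\alpha(Q,Z)<\alpha(Q,X_3\setminus Z)$ qui porte la première assertion, il n'existe pas d'écart analogue pour $\aess(Q)$, puisque le complémentaire $X_3\setminus Z$ tout entier atteint déjà la constante essentielle; de nombreuses autres suites situées sur d'autres droites passant par $Q$ partagent le même exposant d'approximation, si bien que l'on ne peut confiner les suites optimales à une unique droite $D$. L'enjeu principal du corollaire est précisément d'illustrer ce contraste entre les deux notions de la définition.
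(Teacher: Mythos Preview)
Ton argument est correct et suit essentiellement la même démarche que le papier, qui se contente d'écrire en une ligne: \og Cela découle de $\alpha(Q,X_3\setminus Z)=3$, $\alpha(Q,Z)=2$ et de la discussion dans la Section 2.2 \fg. Tu développes explicitement ce que le papier sous-entend: l'appel au Théorème~\ref{th:3} sur chaque $l_i$ pour obtenir $\alpha(Q,Z)=2$, l'appel au Théorème~\ref{th:1} pour $\alpha(Q,X_3\setminus Z)=3$, puis la reformulation équivalente \og $\alpha(Q,Z)<\alpha(Q,X\setminus Z)$ \fg\ de la définition d'\og est calculée \fg; et pour la seconde assertion, tu reprends exactement le contenu de la Section~2.2 ($\omega_X^{-1}|_D\simeq\mathcal{O}_D(3)$ donc $\alpha(Q,D)=3$). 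Ta discussion finale sur la distinction entre \og est calculée \fg\ et \og peut être calculée \fg\ est un ajout pédagogique bienvenu que le papier ne détaille pas.
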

\begin{proof}[Démonstration]
 Cela découle de $\alpha(Q,X_3\diagdown Z)=3,  \alpha(Q,Z)=2$ et de la discussion dans la Section 2.2.
\end{proof}

\begin{rmk}
 D'après (\cite{Broustet}), $\varepsilon(Q)=2$. Donc $\alpha(Q,X_3)>\frac{2}{3}\varepsilon(Q)$.
Cela signifie que l'hypothèse dans Théorème \ref{thm2} ci-dessus n'est pas vérifiée ici, alors qu'il 
existe des sous-variétés localement accumulatrices.
\end{rmk}

 \section{Outils: la transformation de Cremona et la paramétrisation des droites}
\subsection{La transformation de Cremona}
 On définit une application rationnelle $\psi:\mathbb{P}^2\dashrightarrow\mathbb{P}^2$ par
\begin{equation}\label{eq:Cremona}
 \psi:[x:y:z]=[yz:xz:xy].
\end{equation}
Cette application $\psi$ est bien définie sauf en les trois points $P_1,P_2,P_3$.  Elle est d'ordre $2$ et induit un morphisme:
$$\Psi: X_3\to X_3$$
$$\Psi([u_1:v_1]\times[u_2:v_2]\times[u_3,v_3])=[v_1:u_1]\times[v_2:u_2]\times[v_3,u_3].$$
En dehors des trois diviseurs exceptionnels $E_i$, $\Psi$ est calculé par la formule (\ref{eq:Cremona}).
Ce morphisme établit une bijection entre les points rationnels dans la région $R$ et ceux dans la région 
\begin{equation}\label{regions}
S=\{[x:y:z]:z>y>x>0\}.
\end{equation}
Par un calcul très simple que l'on omet, on a 
\begin{propo}\label{po:Cremona1}
La transformation de Cremona préserve la hauteur:
 $$\h\circ \Psi=\h.$$
\end{propo}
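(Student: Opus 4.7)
Le plan est de calculer directement $H(\Psi(P))$ pour $P=\pi^{-1}([x:y:z])$ avec $xyz\neq 0$, en partant de la formule explicite donnée dans l'introduction, et de vérifier que l'on retombe sur $H(P)$. L'observation préliminaire cruciale est que cette formule est invariante par multiplication simultanée des trois coordonnées homogènes par un même entier non nul : tant le numérateur (trois maxima) que le dénominateur (trois pgcd) sont homogènes de même degré par rapport à chaque couple de coordonnées. On peut donc appliquer la formule à n'importe quel représentant entier du point projectif $\Psi(P)$, sans avoir à réduire au préalable $[yz:xz:xy]$ en fraction irréductible, ce qui évite toute analyse délicate du pgcd de $yz$, $xz$ et $xy$.

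\textbf{Calcul.} On utilise alors les deux identités élémentaires
$$\max(|az|,|bz|)=|z|\max(|a|,|b|),\qquad \pgcd(az,bz)=|z|\pgcd(a,b),$$
valables pour tous entiers non nuls. En les appliquant aux trois maxima et aux trois pgcd intervenant dans $H(\pi^{-1}([yz:xz:xy]))$, on factorise $|xyz|$ à la fois au numérateur et au dénominateur. Après simplification par $|xyz|$, le rapport restant est exactement la formule donnant $H(\pi^{-1}([x:y:z]))$.

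\textbf{Point de vue conceptuel.} De manière équivalente, la hauteur $H$ se factorise, via le plongement $X_3\hookrightarrow (\mathbb{P}^1)^3$ décrit dans l'introduction, comme un produit des trois hauteurs naturelles sur $\mathbb{P}^1$ associées aux paires $(x,y)$, $(y,z)$ et $(x,z)$, ce qui reflète l'isomorphisme $\omega_X^{-1}\simeq \mathcal{O}(1,1,1)$. Le morphisme $\Psi$ agit sur chaque facteur $\mathbb{P}^1$ par l'échange $[u_i:v_i]\mapsto [v_i:u_i]$, qui préserve trivialement la hauteur standard sur $\mathbb{P}^1$ (puisque les formules $\max(|u|,|v|)$ et $\pgcd(u,v)$ sont symétriques). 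Il n'y a donc pas d'obstacle sérieux à la preuve ; le seul point de vigilance est l'absence de coprimalité des coordonnées de $[yz:xz:xy]$, contournée précisément par l'homogénéité de la formule de hauteur.
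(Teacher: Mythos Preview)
Your proposal is correct. The paper itself does not give a proof at all: it simply writes ``Par un calcul très simple que l'on omet'' and states the proposition. Your direct computation via the homogeneity of the height formula, together with the conceptual remark that $\Psi$ acts on each $\mathbb{P}^1$ factor by $[u:v]\mapsto[v:u]$ (which visibly preserves $\max(|u|,|v|)/\pgcd(u,v)$), is precisely the kind of routine verification the paper is alluding to and omitting.
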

\subsection{La paramétrisation}
 Tout d'abord on introduit quelques paramètres intrinsèques pour les points rationnels dans $R$.
\begin{defi}
 Pour $V=[x:y:z]\in  R$ avec $\pgcd(x,y,z)=1$, on note
 \begin{equation}\label{eq:pgcd}
     d_1=\pgcd(y,z),\quad d_2=\pgcd(x,z),\quad d_3=\pgcd(x,y),\quad d_4=\pgcd(x-z,y-z),
 \end{equation}

  $$ m(V)=\frac{y-z}{d_4}, \quad n(V)=\frac{x-z}{d_4},$$
  \begin{equation}\label{eq:e}
    e_1=\frac{y-z}{d_1 d_4},\quad e_2=\frac{x-z}{d_2 d_4},\quad e_3=\frac{x-y}{d_3 d_4}.
  \end{equation}
on voit que 
$$\pgcd(e_i,e_j)=1.$$
La \textit{pente} est définie par 
\begin{equation}\label{eq:pente}
 \mu(V)=\frac{y-z}{x-z}=\frac{m(V)}{n(V)}.
\end{equation}
On va noter $u(V)=d_4$ et $\e(V)$ le triplet $(e_1,e_2,e_3)$ pour le point $V$. 
\end{defi}
Les nombres correspondants pour les points rationnels dans $S$ sont définies en échangeant $x$ et $z$. 
Pour éviter toute confusion, on les donne en détails.
Pour $U=[x:y:z]\in  S$ avec $\pgcd(x,y,z)=1$, on définit
  $$ d_1^\prime=\pgcd(y,x),\quad d_2^\prime=\pgcd(x,z),\quad d_3^\prime=\pgcd(z,y),\quad d_4^\prime=\pgcd(x-z,y-z),$$
  $$m^\prime(V)=\frac{y-x}{d_4^\prime}, \quad n^\prime(V)=\frac{z-x}{d_4^\prime},$$
  $$e_1^\prime=\frac{y-x}{d_1^\prime d_4^\prime},\quad e_2^\prime=\frac{z-x}{d_2^\prime d_4^\prime},\quad e_3^\prime=\frac{z-y}{d_3^\prime d_4^\prime}.$$ 
 $$\mu^\prime(V)=\frac{y-x}{z-x}=\frac{m^\prime(V)}{n^\prime(V)}.$$
 $$u^\prime(U)=d_4^\prime, \quad \e^\prime(V)=(e_1^\prime,e_2^\prime,e_3^\prime).$$
 On décrit maintenant comment la transformation de Cremona se traduit sur ces paramètres.
  \begin{propo}\label{po:Cremona2}
  Soit $V=[x:y:z]\in  R$. On note $\e(V)=(e_1,e_2,e_3)$ et $U=\Psi(V)\in S$. Alors on a
 $$u(V)= u^\prime(U), \quad \e^\prime(U)=(e_3,e_2,e_1),$$
 $$m^\prime(U)=\frac{e_1e_2e_3z}{m(V)n(V)},\quad n^\prime(U)=\frac{e_1e_2e_3y}{m(V)(n(V)-m(V))}.$$
  \end{propo}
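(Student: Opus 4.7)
La stratégie consiste à effectuer un calcul affine direct pour suivre l'effet de $\Psi$ sur chacun des paramètres attachés à $V$. On écrit $V=[x:y:z]$ avec $\pgcd(x,y,z)=1$, de sorte que le représentant homogène (a priori non réduit) de $U=\Psi(V)$ est $(yz,xz,xy)$. L'hypothèse $\pgcd(x,y,z)=1$ entraîne que les entiers $d_1,d_2,d_3$ sont deux à deux premiers entre eux, puisque tout diviseur commun à deux d'entre eux diviserait $\pgcd(x,y,z)$. Une analyse place par place, utilisant qu'au moins un des $v_p(x),v_p(y),v_p(z)$ s'annule, fournit alors
\[ \pgcd(yz,xz,xy)=d_1d_2d_3, \]
et l'on dispose donc des coordonnées primitives
\[ (x',y',z')=\left(\frac{yz}{d_1d_2d_3},\frac{xz}{d_1d_2d_3},\frac{xy}{d_1d_2d_3}\right) \]
pour $U$. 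La coprimalité deux à deux des $d_i$ permet d'en déduire directement
\[ d_1'=\frac{z}{d_1d_2},\quad d_2'=\frac{y}{d_1d_3},\quad d_3'=\frac{x}{d_2d_3}. \]

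On substitue ensuite les identités $y-z=d_1d_4e_1$, $x-z=d_2d_4e_2$, $x-y=d_3d_4e_3$ dans les expressions
\[ z'-x'=\frac{y(x-z)}{d_1d_2d_3},\quad z'-y'=\frac{x(y-z)}{d_1d_2d_3},\quad y'-x'=\frac{z(x-y)}{d_1d_2d_3}. \]
Après simplification, chaque différence se factorise sous la forme $d_4\cdot d_i'e_j$:
\[ z'-x'=d_2'\,d_4\,e_2,\quad z'-y'=d_3'\,d_4\,e_1,\quad y'-x'=d_1'\,d_4\,e_3. \]

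La proposition s'en déduit pourvu que l'on vérifie que $d_4$ coïncide avec $u'(U)=d_4'=\pgcd(z'-x',z'-y')$, c'est-à-dire
\[ \pgcd(d_2'e_2,\,d_3'e_1)=1. \]
Cette coprimalité constitue le véritable point technique de la démonstration et se vérifie place par place: pour chaque nombre premier $p$, on distingue les cas selon lequel des entiers $x,y,z$ n'est pas divisible par $p$, et l'on combine $\pgcd(x,y,z)=1$, la coprimalité deux à deux des $d_i$ et l'hypothèse $\pgcd(e_1,e_2)=1$ pour exclure toute divisibilité simultanée. Une fois cela acquis, $u'(U)=d_4=u(V)$ et les valeurs
\[ e_1'=\frac{y'-x'}{d_1'd_4'}=e_3,\quad e_2'=\frac{z'-x'}{d_2'd_4'}=e_2,\quad e_3'=\frac{z'-y'}{d_3'd_4'}=e_1 \]
se lisent immédiatement sur les factorisations ci-dessus. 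Enfin, les formules fermées pour $m'(U)=(y'-x')/d_4'$ et $n'(U)=(z'-x')/d_4'$ annoncées dans l'énoncé se récupèrent en réécrivant $d_1'$ et $d_2'$ à l'aide des identités $m(V)n(V)=d_1d_2e_1e_2$ et $m(V)(n(V)-m(V))=d_1d_3e_1e_3$.
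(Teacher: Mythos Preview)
Your proof is correct and follows essentially the same direct computation as the paper: reduce the homogeneous coordinates of $U$ by $d_1d_2d_3$, identify the $d_i'$, factor the three differences, and read off $d_4'$, $e_i'$, $m'(U)$, $n'(U)$. If anything, you are more careful than the paper on the one nontrivial step: the paper simply asserts $\pgcd\!\big(\tfrac{y(x-z)}{d_1d_2d_3},\tfrac{x(y-z)}{d_1d_2d_3}\big)=\pgcd(x-z,y-z)$, whereas you correctly isolate this as the identity $\pgcd(d_2'e_2,d_3'e_1)=1$ and indicate the place-by-place verification.
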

\begin{proof}[Démonstration]
 On a $$U=[\frac{yz}{d_1d_2d_3}:\frac{xz}{d_1d_2d_3}:\frac{xy}{d_1d_2d_3}].$$ 
 Ces coordonnées de $U$ sont premières entre elles.
 On vérifie directement que 
 \begin{align*}
  u^\prime(U)&=\pgcd\left(\frac{y(x-z)}{d_1d_2d_3},\frac{x(y-z)}{d_1d_2d_3}\right)\\
  &=\pgcd(x-z,y-z)=u(V).
 \end{align*}
 $$d_1^\prime=\frac{z}{d_1d_2},\quad d_2^\prime=\frac{y}{d_1d_3},\quad d_3^\prime=\frac{x}{d_2d_3},\quad d_4^\prime=d_4.$$
 $$e_1^\prime=\frac{1}{d_1d_2d_3}\frac{xz-yz}{d_1^\prime d_4^\prime}=\frac{x-y}{d_3d_4}=e_3,$$
  $$e_2^\prime=\frac{1}{d_1d_2d_3}\frac{xy-yz}{d_2^\prime d_4^\prime}=\frac{x-z}{d_2d_4}=e_2,$$
 $$e_3^\prime=\frac{1}{d_1d_2d_3}\frac{xy-xz}{d_3^\prime d_4^\prime}=\frac{y-z}{d_1d_4}=e_1.$$
 $$m^\prime(U)=\frac{z(x-y)}{d_1d_2d_3d_4}=\frac{e_1e_2e_3z}{m(V)n(V)},$$
 $$n^\prime(U)=\frac{y(x-z)}{d_1d_2d_3d_4}=\frac{e_1e_2e_3y}{m(V)(n(V)-m(V))}.$$
\end{proof}

 Maintenant on introduit une paramétrisation pour les droites passant par $Q$ (sauf $l_i(1\leqslant i\leqslant 3)$).
 En termes de cette paramétrisation l'expression des pgcd est relativement simple.
 Toutes ces droites sont les transformations strictes des droites dans $\p $ de la forme 
 $$ax+by+cz=0 , \quad a+b+c=0, \quad abc\neq0.$$
 On peut toujours supposer que $\pgcd(a,b)=1$.
 Pour une telle droite $l$, on définit le morphisme de paramétrisation sur la région $R$ par:
  \begin{align*}
   \phi_{a,b}:~& \mathbb{P} ^1 \rightarrow l\subset X_3\\
              & [u:v]\mapsto [ub+v:-ua+v:v].
  \end{align*}
  Pour la région $S$, it suffit d'échanger les coordonnées.
   \begin{align*}
   \phi^\prime_{a,b}:~& \mathbb{P} ^1 \rightarrow l\subset X_3\\
              & [u:v]\mapsto [v:-ua+v:ub+v].
              \end{align*}
              
  \begin{propo}\label{po:relation}
   Il existe une bijection entre l'ensemble
   $$\{(a,b,u,v)\in\mathbb{Z}^4:0<-a<b,u>0,v>0,\pgcd(a,b)=\pgcd(u,v)=1\}$$
  et l'ensemble des points rationnels dans $R$ (resp. $S$).
  \end{propo}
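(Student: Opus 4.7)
La stratégie est d'exhiber des applications explicites dans les deux sens et de vérifier qu'elles sont réciproques l'une de l'autre. Traitons d'abord le cas de la région $R$; celui de $S$ s'en déduira par symétrie (échange de $x$ et $z$), ou directement via la transformation de Cremona de la Proposition \ref{po:Cremona2}.

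Dans un premier sens, je considérerais l'application $(a,b,u,v)\mapsto [ub+v:-ua+v:v]$, c'est-à-dire l'évaluation de $\phi_{a,b}$ en $[u:v]$. Il faut d'abord vérifier que l'image tombe dans $R$: puisque $u>0$ et $0<-a<b$, on a $ub+v>-ua+v>v>0$, donc $x>y>z>0$. Il faut ensuite contrôler la coprimalité: si $p$ divise $v$ et $ub+v$, alors $p\mid ub$; de même $p\mid ua$, et $\pgcd(a,b)=1$ force $p\mid u$, d'où $p\mid\pgcd(u,v)=1$, contradiction. Donc $\pgcd(ub+v,-ua+v,v)=1$, et l'on obtient bien un point rationnel de $R$ en écriture homogène minimale.

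Dans l'autre sens, étant donné $[x:y:z]\in R$ avec $\pgcd(x,y,z)=1$, la droite unique passant par $Q=[1:1:1]$ et par ce point a une équation $\alpha x+\beta y+\gamma z=0$ avec $\alpha+\beta+\gamma=0$, qui se réécrit $\alpha(x-z)+\beta(y-z)=0$. Comme $x>y>z$, le rapport $(y-z)/(x-z)$ est dans $\mathopen]0,1\mathclose[$, ce qui impose, après normalisation par $\pgcd$ et par le signe, les valeurs uniques $b=(x-z)/\pgcd(x-z,y-z)$ et $-a=(y-z)/\pgcd(x-z,y-z)$ vérifiant $0<-a<b$ et $\pgcd(a,b)=1$. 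Je poserais alors $v=z$ et $u=(x-z)/b=\pgcd(x-z,y-z)=u(V)$, ce qui est bien un couple d'entiers positifs. La vérification cruciale est que $\pgcd(u,v)=1$: si un premier $p$ divise à la fois $\pgcd(x-z,y-z)$ et $z$, alors $p$ divise $x$, $y$ et $z$, contredisant $\pgcd(x,y,z)=1$.

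Il reste à vérifier que les deux constructions sont réciproques l'une de l'autre, ce qui découle des identités directes $ub+v=x$, $v-ua=y$, $v=z$ pour le sens direct, et de l'unicité de la droite et des paramètres dans le sens retour. Le cas de $S$ est alors immédiat: l'application $\phi'_{a,b}$ s'obtient en échangeant la première et la troisième coordonnées, et la Proposition \ref{po:Cremona2} assure que la transformation de Cremona induit une bijection entre les points rationnels de $R$ et ceux de $S$ préservant la normalisation des paramètres. Aucun obstacle sérieux n'est à prévoir; la seule délicatesse est comptable, à savoir la gestion rigoureuse des signes et des conditions de coprimalité aux deux extrémités de la bijection.
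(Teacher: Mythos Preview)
Your proposal is correct and follows essentially the same approach as the paper: both use the parametrisation $\phi_{a,b}$ for the forward map and recover $(a,b,u,v)$ from $[x:y:z]$ via $-a/b=(y-z)/(x-z)$ and $u=\pgcd(x-z,y-z)$, $v=z$ (the paper expresses this as $u/v=(x/z-1)/b$, which reduces to the same thing). You are simply more explicit about the coprimality checks in both directions, which the paper leaves implicit. For the region $S$, the paper invokes direct symmetry (swap $x$ and $z$, use $\phi'_{a,b}$) rather than the Cremona transformation; your first suggestion of symmetry is closer to the paper's own argument and is the more natural route here, since Proposition~\ref{po:Cremona2} is logically downstream of this bijection.
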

\begin{proof}[Démonstration]
 Il suffit de montrer cela pour $R$ car les deux régions et les paramétrisations sont complètement symétriques.
 Une direction est déjà donnée par les paramétrisations.
 Pour l'inverse, on se donne un point rationnel $[x:y:z]\in R$ avec $\pgcd(x,y,z)=1$,
 on définit $0<-a<b$ et $u,v>0$ de sorte que
 $$-\frac{a}{b}=\frac{y-z}{x-z},\quad \frac{u}{v}=\frac{\frac{x}{z}-1}{b}=\frac{\frac{y}{z}-1}{-a}$$
 avec $$\pgcd(a,b)=\pgcd(u,v)=1.$$
 Cela veut dire que ce point correspond à un quadruplet unique $(a,b,c,d)$ vérifiant la condition souhaitée.
 Cela établit la bijection.
\end{proof}
Pour $V=[x:y:z]\in R\cap l$, on vérifie la relation suivante entre notre paramétrisation et les paramètres introduits précédemment.
  \begin{equation}\label{eq:relationabu}
   u=u(V),\quad a=-m(V),\quad b=n(V),
  \end{equation}
  $$d_1=\pgcd(-ua+v,v)=\pgcd(a,v),\quad e_1=-\frac{a}{d_1},$$
 $$d_2=\pgcd(ub+v,v)=\pgcd(b,v),\quad e_2=\frac{b}{d_2},$$
 $$d_3=\pgcd(ub+v,-ua+v)=\pgcd(ub+v,a+b)=\pgcd(-ua+v,a+b), \quad e_3=\frac{a+b}{d_3}.$$

Avant de faire le calcul, on remarque qu'une autre paramétrisation, qui est celle d'origine, est de paramétrer les coordonnées directement:
$$(z_{i,j},1\leqslant i<j\leqslant 5)\longmapsto (z_{1,4}z_{1,3}z_{2,5},z_{1,5}z_{1,3}z_{2,4},z_{1,4}z_{1,5}z_{2,3},z_{1,5},z_{1,4},z_{1,3})
=(x,y,z,d_1,d_2,d_3).$$
Elle vient du torseur universel au-dessus de $X_3$.
En l'utilisant on peut aussi éliminer les pgcd dans la formule de la hauteur.
Mais la difficulté viendra du calcul de la distance.
Rappelons que dans la formule de la distance sur le plan de tangent, l'expression ``$x-z$'' apparaît.
Cela va produire une condition linéaire sur les paramètres.
C'est cette condition qui ajoute certaines difficultés pour contrôler les termes d'erreur.
 \section{Le calcul global}
Pour obtenir une meilleure compréhension sur la distribution des points rationnels, on calcule d'abord la limite du
nombre total des points rationnels dans un voisinage compact de $Q$ dans $T_Q X_3$.
Comme indiqué précédemment, on peut se ramener à la région $R$.
On va calculer la limite de la suite $\{\varrho_{B,Q}(\chi(\varepsilon))\}_B$ où $\chi(\varepsilon)$ désigne la fonction caractéristique
du domaine $\{(x,y)\in \mathbb{R}^2:x>y>0, d(Q,(x,y))\leqslant \varepsilon\}\subset T_Q X_3$ avec $\varepsilon>0$ quelconque.
On va voir que pour $\varepsilon$ fixé,  on obtient la finitude de plusieurs paramètres $u(\cdot),e(\cdot)$, ce qui nous permet d'écrire la formule de somme d'une manière plus facile.

 On a \begin{equation}
       \varrho_{B,Q}(\chi(\varepsilon))=\sharp\left\{
 \begin{aligned}
  &P=[x:y:z]\\
  &x>y>z>0\\
  &\pgcd(x,y,z)=1
 \end{aligned}
\left|
\begin{aligned}
 &\max\left(\left|\frac{x}{z}-1\right|,\left|\frac{y}{z}-1\right|\right)\leqslant \varepsilon B^{-\frac{1}{3}}\\
 &\frac{\max(|x|,|y|)\max(|x|,|z|)\max(|y|,|z|)}{\pgcd(x,y)\pgcd(x,z)\pgcd(x,z)}\leqslant B
\end{aligned}
\right\}\right.
      \end{equation}
 On va sommer d'abord sur chaque droite. 
  \begin{equation}\label{eq:sommetotale}
   \varrho_{B,Q}(\chi(\varepsilon))=\sum_{\substack{0<-a<b\\\pgcd(a,b)=1}} \sharp F_B(a,b)
  \end{equation}
  où en introduisant les paramètres $u,v,a,b,d_1,d_2,d_3$ comme (\ref{eq:relationabu}) (pour $(a,b)$ fixé),
   \begin{align}
 F_B(a,b) &=\left\{
 \begin{aligned}
  &P=[x:y:z]\\
  &x>y>z>0\\
  &\pgcd(x,y,z)=1
 \end{aligned}
\left|
\begin{aligned}
 &\left|\frac{x}{z}-1\right|\leqslant \varepsilon B^{-\frac{1}{3}}; \mu(P)=-\frac{a}{b}\\
 &\frac{\max(|x|,|y|)\max(|x|,|z|)\max(|y|,|z|)}{\pgcd(x,y)\pgcd(x,z)\pgcd(x,z)}\leqslant B
\end{aligned}
\right\}\right.\\
  &=\left\{  
 \begin{aligned}
 & (u,v)\in \mathbb{N}^2\diagdown\{(0,0)\},\\
 &\pgcd(u,v)=1
 \end{aligned}
 \left| 
 \begin{aligned}
  &\frac{u}{v}\leqslant\varepsilon B^{-\frac{1}{3}}b^{-1} ~\textcircled{1}\
  &(ub+v)^2(-ua+v)\leqslant Bd_1d_2d_3~\textcircled{2}
 \end{aligned}
\right\} \right.
 \end{align}
 On note $e=e_1e_2e_3$ et $\lambda=-\frac{a}{b}(1+\frac{a}{b})$.
 D'une part, pour tout $(u,v)\in F_B(a,b)$, la condition \textcircled{2} nous donne 
 $$v^3<(ub+v)^2(-ua+v)\leqslant Bd_1d_2d_3.$$ Compte tenu de \textcircled{1}, 
$$u\leqslant \frac{\varepsilon v}{B^{\frac{1}{3}}b} < \frac{\varepsilon }{b}(d_1d_2d_3)^{\frac{1}{3}}
  = \frac{\varepsilon }{b}(|a|b(a+b))^{\frac{1}{3}}e^{-\frac{1}{3}}=\varepsilon (\lambda e^{-1})^{\frac{1}{3}}$$
ce qui entraîne que
$$\lambda \geqslant \frac{u^3e}{\varepsilon^3},$$
d'où
\begin{equation}\label{eq:slope}
 \left|\frac{a}{b}\right|\in \mathopen ]\frac{1}{2}-C_0,\frac{1}{2}+C_0\mathclose [.
\end{equation}
avec
\begin{equation}\label{eq:c0}
 C_0=C_0(u,e,\varepsilon)=\frac{1}{2}\sqrt{1-\frac{4eu^3}{\varepsilon^3}}.
\end{equation}
Cela nous donne une borne pour la pente des droites intervenant.

D'autre part, 
on a 
  \begin{equation}   
u\leqslant \varepsilon (\lambda e^{-1})^{\frac{1}{3}}<\frac{\varepsilon }{4^{\frac{1}{3}}}e^{-\frac{1}{3}}
  \end{equation}
 d'où \begin{equation}
       ue^{\frac{1}{3}} \leqslant \frac{\varepsilon }{4^{\frac{1}{3}}}.
      \end{equation}
Cela signifie que si l'on fixe $\varepsilon>0 $,
il n'y a qu'un nombre fini de choix pour $u$ et $e$ et donc pour $(e_1,e_2,e_3)$ aussi.
En particulier si $\varepsilon\leqslant 4^{\frac{1}{3}}$, $F(a,b)=\varnothing$.
Donc il y a un \og trou\fg \ autour de $Q$.
En remarquant que cette valeur est indépendante de $B$, cela correspond exactement à la \og borne inférieure\fg \ que l'on a démontré 
dans la section précédente (notons que l'on a changé la fonction de distance).

Nous pouvons écrire (\ref{eq:sommetotale}) sous la forme
\begin{equation}\label{eq:sommeF}
 \varrho_{B,Q}(\chi(\varepsilon))=\sum_{u,e_i} \sharp F(u,e_i,\varepsilon) =\sum_{u,e_i}\sum_{\substack{0<-a<b\\\pgcd(a,b)=1\\e_1|a,e_2|b,e_3|a+b}}\sharp F_{u,e_i,\varepsilon}(a,b)
\end{equation}
où 
   \begin{align}\label{eq:coutingwithfixedparameters}
   F(u,e_i,\varepsilon) &=\left\{
   \begin{aligned}
   &P=[x:y:z]\\
   &x>y>z>0\\
   &\pgcd(x,y,z)=1
   \end{aligned}
   \left|
   \begin{aligned}
   &\left|\frac{x}{z}-1\right|\leqslant \varepsilon B^{-\frac{1}{3}}, u(P)=u,\mathbf{e}(P)=(e_1,e_2,e_3);\\
   &\frac{\max(|x|,|y|)\max(|x|,|z|)\max(|y|,|z|)}{\pgcd(x,y)\pgcd(x,z)\pgcd(x,z)}\leqslant B
   \end{aligned}
   \right\}\right.\end{align} et
$F_{u,e_i,\varepsilon}(a,b)$ est l'ensemble des $ v\in \mathbb{N}\diagdown\{0\}$ vérifiant les deux conditions suivantes
\begin{equation}\label{eq:conditionaa}
 (ub+v)^2(-ua+v)\leqslant B|a|b(a+b)e^{-1},v\geqslant\varepsilon^{-1} B^{\frac{1}{3}}ub ~~\textcircled{a},
\end{equation}
\begin{equation}\label{eq:conditionbb}
 \pgcd(a,v)=-\frac{a}{e_1},\pgcd(b,v)=\frac{b}{e_2},\pgcd(-ua+v,a+b)=\frac{a+b}{e_3},\pgcd(u,v)=1~\textcircled{b}.
\end{equation}

Maintenant on fixe $\varepsilon,u,e_1,e_2,e_3$ avec $\pgcd(e_i,e_j)=1$ pour $1\leqslant i<j\leqslant3$.
Rappelons que $e=e_1e_2e_3$. Nous analysons les conditions \textcircled{a} et \textcircled{b} séparément.

\textbf{I. La condition \textcircled{a}}.\\
Puisque la condition 
$$(ub+v)^3\leqslant B|a|b(a+b)e^{-1}$$
implique la première condition de \textcircled{a}, qui elle-même entraîne que
$$v^3\leqslant B|a|b(a+b)e^{-1},$$
l'ensemble des solutions $v$ de la condition $\textcircled{a}$ est l'intersection avec $\mathbb{Z}$ de l'intervalle 
\begin{equation}\label{eq:interval}
I=I_{a,b,u,e,\varepsilon}=[\varepsilon^{-1} B^{\frac{1}{3}}ub,B^{\frac{1}{3}} e^{-\frac{1}{3}}(|a|b(a+b))^{\frac{1}{3}}-ubC_{a,b,u,B}]
\end{equation}
où $C_{a,b,u,B}\in\mathopen]0,1\mathclose[$ est une constante qui dépend de $a,b,B,u$. 
Pour que l'intervalle $I$ soit non-vide, on doit avoir 
$$e^{-\frac{1}{3}}B^{\frac{1}{3}} (|a|b(a+b))^{\frac{1}{3}}-ubC_{a,b,u,B}\geqslant\varepsilon^{-1} B^{\frac{1}{3}}ub.$$
Il en découle que 
\begin{equation}\label{eq:slope1}
   \left|\frac{a}{b}\right|\in \mathopen ]\frac{1}{2}-C_1,\frac{1}{2}+C_1\mathclose [.
 \end{equation}
 où 
\begin{equation}\label{eq:C1}
 C_1=C(a,b,u,\varepsilon,B)=\frac{1}{2}\sqrt{1-\frac{4eu^3}{\varepsilon^3}(1+\varepsilon C_{a,b,B}B^{-\frac{1}{3}})^3}
 <C_0.
\end{equation}

 avec
 \begin{equation}\label{eq:C1small}
  C_0-C_1=O_{\varepsilon,u,e}(B^{-\frac{1}{3}}).
 \end{equation}



\textbf{II. La condition \textcircled{b}}.
 La condition \textcircled{b} (\ref{eq:conditionbb}) implique la relation
\begin{equation}\label{eq:pgcdab}
\pgcd\left(u,\frac{ab(a+b)}{e}\right)=1.~\textcircled{c}
\end{equation} 
Nous restreignons donc la somme aux $(a,b)$ vérifiant cette condition.

Étant donné une paire $(p,q)$ telle que $$pa+qb=1,$$ on a
\begin{lemme}
Les $v$ satisfaisant la condition $\textcircled{b}$ (\ref{eq:conditionbb}) sont 
\begin{equation}\label{eq:latticecondition}
 \Gamma^\prime(a,b)=\{u(q-p)ab+ab(a+b)e^{-1}n:n\in\mathbb{Z},\pgcd(n,ue)=1\}.
\end{equation}
\end{lemme}
\begin{proof}[Démonstration]
 Les $v$ satisfaisant à
 $$\frac{a}{e_1}\left|v\right.,\quad\frac{b}{e_2}\left|v\right.,\quad\frac{a+b}{e_3}\left|-ua+v \right.$$
 constituent en le translaté d'un réseau 
 $$\Gamma(a,b)=\{u(q-p)ab+ab(a+b)e^{-1}n:n\in\mathbb{Z}\}.$$
 Puis les conditions sur les pgcd impliquent que $n$ doit être premier à $u$ et $e$.
 Réciproquement, puisque l'on a la condition (\ref{eq:pgcdab}), on vérifie sans difficulté que tous les éléments de $\Gamma^\prime(a,b)$ vérifient la condition \textcircled{b}.
\end{proof}
 On va démontrer une version préliminaire de la distribution globale avec quelque paramètres fixés.
  
\begin{propo}\label{po:global}
 On fixe $\varepsilon$, $u$, $e_1,e_2,e_3$ et on note $e=e_1e_2e_3$. Rappelons la définition de $F(u,e_i,\varepsilon)$ (\ref{eq:coutingwithfixedparameters}). On a
 \begin{equation}
  \sharp F(u,e_i,\varepsilon)=\sum_{\substack{0<-a<b\\ \pgcd(a,b)=1\\e_1|a,e_2|b,e_3|a+b}}\sharp F_{u,e_i,\varepsilon}(a,b)=
  \frac{2Z(\varepsilon,u,e)}{\pi^2}B^{\frac{1}{3}}\log B + O_{u,e,\varepsilon}(B^{\frac{1}{3}}\log \log B).
 \end{equation}
où
\begin{equation}\label{eq:Zeue}
 Z(\varepsilon,u,e)=\phi(ue)\sum_{k|u}\frac{\mu(k)3^{\omega(k)}}{ k}\left(\frac{1}{e^{\frac{1}{3}}}\int_{\theta\in \mathopen ]\frac{1}{2}-C_0,\frac{1}{2}+C_0\mathclose [} \frac{\operatorname{d}\theta}{(\theta(1-\theta))^{\frac{2}{3}}}-\frac{u}{\varepsilon}\int_{\theta\in \mathopen ]\frac{1}{2}-C_0,\frac{1}{2}+C_0\mathclose [} \frac{\operatorname{d}\theta}{(\theta(1-\theta))}\right).
\end{equation}
\end{propo}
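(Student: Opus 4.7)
The plan is to carry out the count in two stages: first fix $(a,b)$ and estimate $\sharp F_{u,e_i,\varepsilon}(a,b)$, then sum over the admissible pairs. By the preceding lemma, condition \textcircled{b} identifies the valid $v$'s with the translated sublattice $\Gamma'(a,b)$, and intersecting with $I$ (condition \textcircled{a}) reduces the problem to counting integers $n$ coprime to $ue$ in an interval $J_{a,b}$ of length $|I|\cdot e/(ab(a+b))$. A single Möbius inversion on the condition $\gcd(n,ue)=1$ gives
\[
\sharp F_{u,e_i,\varepsilon}(a,b) = \frac{\phi(ue)\,|I|}{u\cdot ab(a+b)} + O\bigl(2^{\omega(ue)}\bigr).
\]
With $\theta=-a/b$ so that $ab(a+b)=b^3\theta(1-\theta)$, and using the explicit expression for $|I|$ in (\ref{eq:interval}), a direct calculation shows that the main contribution from each pair is $\phi(ue)B^{1/3}\,g(\theta)/(ub^2)$, where $g(\theta)=\frac{1}{e^{1/3}(\theta(1-\theta))^{2/3}}-\frac{u}{\varepsilon\,\theta(1-\theta)}$.

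The main summation step then consists of evaluating $\sum_{(a,b)} g(\theta_{a,b})/b^2$ over the admissible pairs. For fixed $b$ (a multiple of $e_2$), the sum over $a$ satisfying $e_1\mid a$, $e_3\mid a+b$, $\gcd(a,b)=1$ and $\gcd(u,ab(a+b)/e)=1$ is a Riemann sum over $\theta$ in the interval dictated by (\ref{eq:slope1}) that approaches $b\int g(\theta)\,d\theta$ times a density factor. This density combines three ingredients: Möbius inversion on $\gcd(a,b)=1$, contributing the classical $1/\zeta(2)=6/\pi^2$ in the Dirichlet-type average over $b$; the divisibility constraints $e_1,e_2,e_3$; and the condition $\gcd(u,ab(a+b)/e)=1$, which unfolds as $\sum_{k\mid u}\mu(k)\,3^{\omega(k)}/k$. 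The factor $3^{\omega(k)}$ appears because $\gcd(a,b)=1$ forces, for each prime $p\mid k$, exactly one of the three mutually exclusive events $p\mid a$, $p\mid b$, $p\mid a+b$ to hold, so the inclusion-exclusion over divisors of $u$ weights each prime by $3/p$. Summing $1/b$ over $b\leqslant b_{\max}\sim B^{1/3}$ yields $\tfrac{1}{3}\log B + O(1)$, and combined with the $6/\pi^2$ this produces the factor $2/\pi^2$; one then assembles the claimed main term $\frac{2Z(\varepsilon,u,e)}{\pi^2}B^{1/3}\log B$.

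The main technical obstacle is the error control to the required $O(B^{1/3}\log\log B)$. The trivial Möbius bound $O(2^{\omega(ue)})$ per pair, summed over the $\Theta(B^{2/3})$ admissible pairs, is vastly larger than permissible. The plan is to dichotomise: for $b$ well below $b_{\max}$, the interval $J_{a,b}$ is long compared to $ue$ and the Möbius estimate is sharp enough to accumulate only to $O(B^{1/3})$; for $b$ near the threshold, where $C_1$ differs from $C_0$ only by $O_{\varepsilon,u,e}(B^{-1/3})$ according to (\ref{eq:C1small}), one has $\sharp F_{u,e_i,\varepsilon}(a,b)=O(1)$ pointwise and the number of such $(a,b)$ is small. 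The $\log\log B$ factor in the error is expected to emerge from the interplay of the Riemann-sum approximation error with the harmonic sum $\sum 1/b$ near the cut-off, together with divisor-function averages arising from the various Möbius unfoldings. Balancing these contributions uniformly in the parameters $\varepsilon,u,e$ will be the core technical work.
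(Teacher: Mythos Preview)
Your per-pair estimate and the identification of the arithmetic factors $\phi(ue)$, $6/\pi^2$, and $\sum_{k\mid u}\mu(k)3^{\omega(k)}/k$ are all correct, and this is indeed the first half of the argument. But there is a genuine structural gap in how you handle the range of $b$.

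The formula $\sharp F_{u,e_i,\varepsilon}(a,b)\approx \phi(ue)\,|I|\,e/|ab(a+b)|$ is only valid when the interval $I$ is long compared to the lattice period $|ab(a+b)|/e\asymp b^3$. Since $|I|\asymp B^{1/3}b$, this requires $b\ll B^{1/6}$, not $b\ll B^{1/3}$. For $b>DB^{1/6}$ the interval contains at most one lattice point and the main-term approximation carries no information; your dichotomy therefore needs to be placed at $B^{1/6}$, and summing $1/b$ over $b\leqslant DB^{1/6}$ yields only $\tfrac{1}{6}\log B$, i.e.\ half of the claimed main term. Your assertion that for $b$ ``near the threshold'' the number of contributing pairs is small is false: the range $B^{1/6}\lesssim b\lesssim B^{1/3}$ contains on the order of $B^{1/3}\log B$ pairs with $\sharp F_{u,e_i,\varepsilon}(a,b)=1$, and these supply exactly the missing half of the main term, not an error.

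The tool you are missing is the Cremona involution $\Psi$ on $X_3$ (Propositions~\ref{po:Cremona1} and~\ref{po:Cremona2}). It preserves the height and the parameters $u,e$, and sends a point $V\in R$ with $n(V)=b$ to a point $U\in S$ with $n'(U)\asymp eB^{1/3}/b$; thus it exchanges the regimes $b\leqslant DB^{1/6}$ and $b>DB^{1/6}$ (Lemma~\ref{le:Cremona}). By symmetry between $R$ and $S$, the large-$b$ contribution therefore equals the small-$b$ contribution up to $O(B^{1/3})$ (Lemma~\ref{le:constante}), which produces the factor $2$ and closes the argument. That your formal extrapolation to $b_{\max}\sim B^{1/3}$ yields the correct constant is a reflection of this symmetry, but it is not a proof without it.
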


Pour traiter une somme sur un réseau, it faut en déterminer l'indice. On fixe $(\alpha_1,\alpha_2,\alpha_3)\in (\mathbb{N}\diagdown \{0\})^3$
avec $\pgcd(\alpha_i,\alpha_j)=1$, on considère l'ensemble
\begin{equation}
H(\alpha_i)=\{(a,b,c)\in\mathbb{Z}^3:a+b+c=0,\alpha_1|a,\alpha_2|b,\alpha_3|c\}.
\end{equation}
C'est un sous-groupe de 
$$G=\{(a,b,c)\in\mathbb{Z}^3:a+b+c=0\}\simeq \mathbb{Z}^2.$$
La projection des deux premières variables
$$(a,b,c)\mapsto(a,b)$$
établit un isomorphisme des réseaux $H(\alpha_i)$ et
\begin{equation}\label{eq:isomorphismoflattices}
\{(a,b)\in \mathbb{Z}^2:\alpha_1|a,\alpha_2|b,\alpha_3|a+b\}.
\end{equation}
\begin{propo}
 \begin{equation}
  [G:H(\alpha_i)]=\alpha=\alpha_1\alpha_2\alpha_3.
 \end{equation}
\end{propo}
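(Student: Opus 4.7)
The plan is to realize the index $[G : H(\alpha_i)]$ as the cardinality of a finite quotient via the first isomorphism theorem. Concretely, I would introduce the group homomorphism
\begin{equation*}
  \phi \colon G \longrightarrow \mathbb{Z}/\alpha_1\mathbb{Z} \times \mathbb{Z}/\alpha_2\mathbb{Z} \times \mathbb{Z}/\alpha_3\mathbb{Z}, \qquad (a,b,c) \longmapsto (a \bmod \alpha_1,\; b \bmod \alpha_2,\; c \bmod \alpha_3).
\end{equation*}
By the very definition of $H(\alpha_i)$, its kernel is precisely $H(\alpha_i)$, so $[G : H(\alpha_i)] = |\operatorname{Im}(\phi)|$. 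Since the target has cardinality $\alpha_1 \alpha_2 \alpha_3 = \alpha$, the proposition will follow as soon as I check that $\phi$ is surjective.

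For surjectivity, fix an arbitrary triple $(r_1, r_2, r_3)$ in the target. I must produce $(a,b,c) \in G$, i.e.\ satisfying $a+b+c = 0$, with $a \equiv r_1 \pmod{\alpha_1}$, $b \equiv r_2 \pmod{\alpha_2}$ and $c \equiv r_3 \pmod{\alpha_3}$. Using the hypothesis $\pgcd(\alpha_i, \alpha_j) = 1$ for $i \neq j$, the Chinese Remainder Theorem lets me first pick $a \in \mathbb{Z}$ with $a \equiv r_1 \pmod{\alpha_1}$ and $a \equiv 0 \pmod{\alpha_3}$, then pick $b \in \mathbb{Z}$ with $b \equiv r_2 \pmod{\alpha_2}$ and $b \equiv -r_3 \pmod{\alpha_3}$. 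Setting $c := -a - b$ we get $c \equiv -0 - (-r_3) = r_3 \pmod{\alpha_3}$, as required.

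There is no genuine obstacle here: the entire content of the statement is the pairwise coprimality of the $\alpha_i$, which is exactly what makes the CRT argument go through. One could alternatively exhibit an explicit $\mathbb{Z}$-basis of $H(\alpha_i)$ inside the basis $\{(1,0,-1),(0,1,-1)\}$ of $G$ and compute the determinant of the change-of-basis matrix, but the homomorphism argument above is both shorter and makes transparent where the coprimality hypothesis is used.
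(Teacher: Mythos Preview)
Your proof is correct and uses essentially the same idea as the paper's: both identify $G/H(\alpha_i)$ with $\bigoplus_i \mathbb{Z}/\alpha_i\mathbb{Z}$ via the Chinese Remainder Theorem and the pairwise coprimality of the $\alpha_i$. The paper packages this through the natural map $G/H \to \bigoplus_i (G/H)\otimes_{\mathbb{Z}} \mathbb{Z}/\alpha_i\mathbb{Z}$ and then argues each tensor factor is $\mathbb{Z}/\alpha_i\mathbb{Z}$, whereas you write down the explicit surjection $G\to\prod_i\mathbb{Z}/\alpha_i\mathbb{Z}$ and check it by hand---the content is the same, and your version is arguably the more transparent of the two.
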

\begin{proof}[Démonstration]
Puisque $H=H(\alpha_i)$ est un sous-module de $G$, c'est un $\mathbb{Z}$-module libre. En tensorisant $H(\alpha_i)$ avec $\mathbb{Q}$,
on voit que le rang de $H(\alpha_i)$ est 2.
Par le théorème des restes chinois, le morphisme naturel 
$$G/H\rightarrow \oplus_{i=1}^3 G/H\otimes_\mathbb{Z} \mathbb{Z}/\alpha_i \mathbb{Z}$$
est un isomorphisme. 
De plus,
$$G/H\otimes_\mathbb{Z} \mathbb{Z}/\alpha_i \mathbb{Z}\simeq \mathbb{Z}/\alpha_i \mathbb{Z}.$$
Donc $$[G:H]=\sharp (\oplus_{i=1}^3  \mathbb{Z}/\alpha_i \mathbb{Z})=\alpha_1\alpha_2\alpha_3.$$
\end{proof}

D'après l'analyse précédente, la condition $\textcircled{a}$
nous fournit un intervalle (\ref{eq:interval}) et la condition $\textcircled{b}$ nous donne un réseau. 
Quand la longeur de l'intervalle est plus petite que la période du réseau, 
le cardinal que l'on cherche est $0$ ou $1$ et n'est pas équivalent à la longeur de l'intervalle.
C'est pourquoi ensuite nous allons discuter les deux cas séparément.

Pour $\varepsilon>u(4e)^\frac{1}{3}$, on définit une constante 
$$D=D(\varepsilon,u,e)=\sqrt{((4e)^{-\frac{1}{3}}-u\varepsilon^{-1})e(4^{-1}-C_0^2)^{-1}}>0.$$
On vérifie qu'on a pour $B\gg O_{u,e,\varepsilon}(1)$ et $b>DB^{\frac{1}{6}}$,
\begin{align*}
	|a|b(a+b)e^{-1}&=b^3\left(\left|\frac{a}{b}\right|\left(1-\left|\frac{a}{b}\right|\right)\right)e^{-1}\\
	&>b^3\left(\frac{1}{4}-C_0^2\right)e^{-1}\\
	&>bD^2B^{\frac{1}{3}}\left(\frac{1}{4}-C_0^2\right)e^{-1}\\
	&>bB^{\frac{1}{3}}\left(\left(\frac{1}{4e}\right)^{\frac{1}{3}}-\frac{u}{\varepsilon}\right)\\
	&>B^{\frac{1}{3}}\left((|a|b(a+b)e^{-1})^{\frac{1}{3}}-\varepsilon^{-1}ub\right).
\end{align*}
Donc l'intervalle $I$ (\ref{eq:interval}) ne contient qu'au plus un point du réseau $\Gamma(a,b)$.  
Nous allons donc décomposer la somme (\ref{eq:sommeF}) en deux termes selon la taille de $a$ et $b$:
\begin{equation}\label{eq:decomp}
\varrho_{B,Q}(\chi(\varepsilon))=\sum_{u,e_i}\left(\sum_{\substack{\pgcd(a,b)=1\\ e_1|a,e_2|b,e_3|a+b\\b\leqslant  DB^{\frac{1}{6}}}}+\sum_{\substack{\pgcd(a,b)=1\\  e_1|a,e_2|b,e_3|a+b\\b> DB^{\frac{1}{6}}}}\right)\sharp F_{u,e_i,\varepsilon}(a,b).
\end{equation}

\textbf{Cas I}. $b\leqslant DB^{\frac{1}{6}}$. La longueur de $I$ est \og assez grande\fg.
La contribution de ce cas est la suivante.

\begin{propo}\label{po:globalcasi}
 \begin{equation}
  \sum_{\substack{\pgcd(a,b)=1\\  e_1|a,e_2|b,e_3|a+b\\b\leqslant DB^{\frac{1}{6}}} }
\sharp F_{u,e_i,\varepsilon}(a,b)=\frac{Z(\varepsilon,u,e)}{\pi^2}B^{\frac{1}{3}}\log B + O(B^{\frac{1}{3}}\log \log B),
 \end{equation}
où $Z(\varepsilon,u,e)$ est donné par (\ref{eq:Zeue}) et $F_{u,e_i,\varepsilon}(a,b)$ est défini par (\ref{eq:conditionaa}) et (\ref{eq:conditionbb}).
\end{propo}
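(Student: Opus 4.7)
The strategy is threefold: first, for each fixed $(a,b)$ in the summation range, estimate $\sharp F_{u,e_i,\varepsilon}(a,b)$ by a lattice-point count in the interval $I$ of (\ref{eq:interval}); second, reduce the remaining coprimality constraint from (\ref{eq:pgcdab}) by Möbius inversion; third, pass to an integral approximation via the substitution $\theta=|a|/b$ to extract the $B^{1/3}\log B$ main term.

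First I would fix $(a,b)$ with $\pgcd(a,b)=1$, $e_1\mid a$, $e_2\mid b$, $e_3\mid a+b$ and $b\leqslant DB^{1/6}$. By the very definition of $D$ the length of $I$ strictly exceeds the period $|a|b(a+b)/e$ of the lattice $\Gamma(a,b)$, so removing the coprimality condition $\pgcd(n,ue)=1$ by Möbius yields
\begin{equation*}
\sharp F_{u,e_i,\varepsilon}(a,b)=\frac{\phi(ue)\,|I|}{u\,|a|b(a+b)}+O\bigl(\tau(ue)\bigr).
\end{equation*}
Writing $|I|=B^{1/3}\bigl(e^{-1/3}(|a|b(a+b))^{1/3}-\varepsilon^{-1}ub\bigr)+O(b)$ (the $O(b)$ coming from the small term $ubC_{a,b,u,B}$), I would check that the $O(\tau(ue))$ error summed over the $O(B^{1/3})$ admissible pairs contributes only $O(B^{1/3})$, while the $O(b)$ boundary contribution summed with weight $1/(u|a|b(a+b))$ reduces to a $\theta$-integral times $\sum_{b\leqslant DB^{1/6}}1/b^2 = O(1)$, hence absorbed.

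Second I would treat the coprimality $\pgcd(u,ab(a+b)/e)=1$ from (\ref{eq:pgcdab}). Since $\pgcd(a,b)=1$ and the $e_i$ are pairwise coprime, the three integers $a/e_1$, $b/e_2$, $(a+b)/e_3$ are pairwise coprime. Consequently, for each squarefree $k\mid u$, the divisibility $k\mid ab(a+b)/e$ admits exactly $3^{\omega(k)}$ decompositions $k=k_1k_2k_3$ with pairwise coprime $k_i$ and $k_i$ dividing the $i$-th factor. Möbius inversion then produces the factor $\sum_{k\mid u}\mu(k)\,3^{\omega(k)}/k$, while the local density computation at each prime — combining $\pgcd(a,b)=1$ with the nested divisibility by $e_ik_i$ — produces via the Euler product the arithmetic factor $\phi(ue)/\pi^2$ appearing in $Z(\varepsilon,u,e)$.

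Third I would pass from the arithmetic sum to an integral. For each $b$, with $a=-\theta b$, the inner sums $\sum_a(|a|b(a+b))^{-2/3}$ and $\sum_a(|a|(a+b))^{-1}$ become, after the arithmetic density factor, $\frac{1}{b}\int_{1/2-C_0}^{1/2+C_0}(\theta(1-\theta))^{-2/3}d\theta$ and $\frac{1}{b}\int_{1/2-C_0}^{1/2+C_0}(\theta(1-\theta))^{-1}d\theta$ up to an $O(1/b)$ pointwise error; then $\sum_{b\leqslant DB^{1/6}}1/b=\tfrac16\log B+O(1)$ produces the main term $\frac{Z(\varepsilon,u,e)}{\pi^2}B^{1/3}\log B$. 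The main obstacle, and the source of the $\log\log B$ loss, is the cumulative control of three nested errors: the replacement of $C_1$ by $C_0$ in the effective $\theta$-range, which by (\ref{eq:C1small}) costs $O(B^{-1/3})$ per $(a,b)$; the divisor-type factors $\tau(ue)\,3^{\omega(k)}$ arising from the successive Möbius inversions; and the integer-to-integral discrepancy. A careful Abel-summation over $b$, together with a standard truncation of the Möbius sum at a threshold of size $\log B$, should bound the aggregate error by $O(B^{1/3}\log\log B)$, as claimed.
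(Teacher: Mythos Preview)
Your three-step outline — lattice count in $v$, M\"obius over $k\mid u$ for the constraint \eqref{eq:pgcdab}, then passage to a $\theta$-integral — is exactly the skeleton of the paper's proof. Two points deserve correction.

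First, a slip: the period of $\Gamma(a,b)$ in \eqref{eq:latticecondition} is $|a|b(a+b)/e$, so your displayed formula should read
\[
\sharp F_{u,e_i,\varepsilon}(a,b)=\frac{\phi(ue)\,|I|}{e^{-1}|a|b(a+b)}+O\bigl(2^{\omega(ue)}\bigr),
\]
with $e^{-1}$ rather than $u$ in the denominator; compare \eqref{eq:classique}.

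Second, and more substantively, your step~3 diverges from the paper and, as written, does not secure the error term. The paper converts the two-dimensional lattice sum
\[
\sum_{(m,n)\in S'(M)\cap H(\alpha_i)} (mn(n-m))^{-2/3}
\]
to the corresponding integral via Lemma~\ref{le:echange}: the region $\{n\le M\}$ is partitioned into $l\asymp\log\log M$ annuli $M^{(2/3)^{k+1}}<n\le M^{(2/3)^{k}}$, on each of which the boundary contribution is $O_{u,e,\alpha,\varepsilon}(1)$, and summing over $k$ yields the $O(\log\log B)$. This is the sole source of the $\log\log B$ in the proposition. Your alternative — fix $b$, replace $\sum_a$ by $\tfrac1b\int d\theta$ with an ``$O(1/b)$ pointwise error'', then sum over $b$ — would, taken at face value, produce $\sum_{b\le DB^{1/6}}O(1/b)=O(\log B)$, which is the size of the main term and hence unacceptable. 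A sharper per-$b$ analysis (the endpoint error is in fact $O(b^{-2})$ in favourable cases) could conceivably yield $O(1)$, but this requires controlling the fibre of the rank-two lattice $H(e_ik_i)$ over each fixed $b$ and its interaction with the second M\"obius variable $d$ (for $\pgcd(a,b)=1$), none of which you address. The vague appeal to ``Abel summation'' and ``truncation at $\log B$'' does not explain where a $\log\log B$ would arise. Either invoke Lemma~\ref{le:echange} directly, or supply a genuine per-$b$ error of size $O(b^{-2})$ together with the lattice-fibre argument.
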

On a besoin du lemme suivant qui nous permet de changer les sommes sur un réseau en intégrales.
\begin{lemme}\label{le:echange}
 Pour $DB^{\frac{1}{6}}\geqslant M\gg e$ et $\underline{k}=(k_1,k_2,k_3)$ fixé, on note $k=k_1k_2k_3,\alpha_i=k_i e_i$ et $\alpha=\alpha_1\alpha_2\alpha_3$.
 Considérons deux régions
 $$S^\prime(M)=S^\prime(e,u,\varepsilon,M,B)=\{(m,n)\in \mathbb{Z}^2: 1\leqslant m<n\leqslant M,\frac{m}{n}\in \mathopen ]\frac{1}{2}-C_1,\frac{1}{2}+C_1\mathclose [\},$$
  $$T(M)=T(u,e,\varepsilon,M)=\{(x,y)\in\mathbb{R}^2:1\leqslant y< x\leqslant M, \frac{y}{x}\in \mathopen ]\frac{1}{2}-C_0,\frac{1}{2}+C_0\mathclose [\},$$
 avec $C_1=C_1(m,n,u,e,\varepsilon)$ dépendant de $m,n$ définie par l'équation (\ref{eq:C1}) et $C_0=C_0(u,e,\varepsilon)$ par (\ref{eq:c0}).
 Alors on a
 $$\sum_{(m,n)\in S^\prime(M)\cap H(\alpha_i)}
 (|m|n(m+n))^{-\frac{2}{3}}=\iint_{T(M)}
 \frac{\operatorname{d}x\operatorname{d}y}{\alpha (xy(x-y))^{\frac{2}{3}}}+O_{u,e,\alpha,\varepsilon}(\log \log B),$$
 $$\sum_{(m,n)\in S^\prime(M)\cap H(\alpha_i)}
 (m(n-m))^{-1}=\iint_{T(M)}
 \frac{\operatorname{d}x\operatorname{d}y}{\alpha x y(x-y)}+O_{u,e,\alpha,\varepsilon}(\log \log B).$$
 \end{lemme}
\begin{proof}[Démonstration du lemme]
 L'idée est qu'on fait une partition sur le domaine de $(m,n)$.

 On définit un sous-ensemble de $\mathbb{N}^2$ auxiliaire
 $$S(M)=\{(m,n)\in \mathbb{Z}^2: 1\leqslant m<n\leqslant M,\frac{m}{n}\in \mathopen ]\frac{1}{2}-C_0,\frac{1}{2}+C_0\mathclose [\}.$$
 La première étape est de comparer la somme sur $S(M)$ et $S^\prime(M)$.
 Pour $(m,n)\in S(M)$, on note $C_2=C_2(m,n,u,e,\varepsilon)=C_0-C_1$.
 Rappelons, d'après (\ref{eq:C1small}), que l'on a $C_2=O_{u,e,\varepsilon}(B^{-\frac{1}{3}})$.
La différence entre la somme sur $S(M)$ et $S^\prime(M)$ s'écrit
 \begin{align*}
 &\sum_{(m,n)\in S(M)\cap H(\alpha_i)}
 (mn(n-m))^{-\frac{2}{3}}-\sum_{(m,n)\in S^\prime(M)\cap H(\alpha_i)}
 (mn(n-m))^{-\frac{2}{3}}\\
 &\leqslant \sum_{\substack{n\leqslant M\\ \frac{m}{n}\in \mathopen ]\frac{1}{2}-C_0,\frac{1}{2}-C_1 \mathclose ] \\
 \cup  \mathclose [\frac{1}{2}+C_1,\frac{1}{2}+C_0 \mathclose ]}}
 (mn(n-m))^{-\frac{2}{3}}\\
&=\sum_{n\leqslant M}\frac{1}{n^2}\sum_{\substack{(\frac{1}{2}-C_0)n<m\leqslant(\frac{1}{2}-C_1)n\\ 
\mbox{ou } (\frac{1}{2}+C_1)n\leqslant m<(\frac{1}{2}+C_0)n}}\left(\frac{m}{n}\left(1-\frac{m}{n}\right)\right)^{-\frac{2}{3}}.
 \end{align*}
 Or pour $(m,n)\in S(M)$, on a $n\leqslant M\leqslant DB^{\frac{1}{6}}$. Donc pour $B$ assez grand, d'après (\ref{eq:C1small}),
 $$C_2 n=\left(\frac{1}{2}-C_1\right)n-\left(\frac{1}{2}-C_0\right)n=\left(\frac{1}{2}+C_0\right)n-\left(\frac{1}{2}+C_1\right)n=O_{u,e,\varepsilon}(B^{-\frac{1}{6}}).$$
 Donc pour $B$ suffisamment grand, $C_2 n<1$.
 Cela nous dit qu'ayant fixé $n$, il y a au plus deux $m$ possibles dans la somme. On obtient la majoration suivante pour
 la dernière somme ci-dessus.
 \begin{align*}
 &\sum_{n\leqslant M}\frac{1}{n^2}\sum_{\substack{(\frac{1}{2}-C_0)n<m\leqslant(\frac{1}{2}-C_1)n\\ 
\mbox{ou } (\frac{1}{2}+C_1)n\leqslant m<(\frac{1}{2}+C_0)n}}\left(\frac{m}{n}\left(1-\frac{m}{n}\right)\right)^{-\frac{2}{3}}\\
&= O_{u,e,\varepsilon}(1) \sum_{n\in\mathbb{N}}\frac{1}{n^2}=O_{u,e,\varepsilon}(1).
 \end{align*}
 Ensuite on fixe une constante $$l=l(\alpha)=\log_{\frac{3}{2}}(\log_\alpha M)=\frac{\log \log M-\log\log \alpha}{\log \left(\frac{3}{2}\right)}. $$
 Alors on a $M^{\left(\frac{2}{ 3}\right)^l}=\alpha$. Pour $0\leqslant k\leqslant l-1$, on note
 $$F_{k,\alpha_i}=\sum _{\substack{(m,n)\in S(M)\cap H(\alpha_i)\\M^{\left(\frac{2}{ 3}\right)^{k+1}}<n\leqslant M^{\left(\frac{2}{ 3}\right)^k}}}
 (mn(n-m))^{-\frac{2}{3}},$$
 $$G_{\alpha_i}=\sum _{\substack{(m,n)\in S(M)\cap H(\alpha_i)\\n\leqslant \alpha}}(mn(n-m))^{-\frac{2}{3}}.$$
 On décompose la somme de la façon suivante.
 \begin{align*}
  &\sum_{(m,n)\in S(M)\cap H(\alpha_i)}
 (mn(n-m))^{-\frac{2}{3}}\\
 &=\sum_{0\leqslant k\leqslant l-1}\sum _{\substack{(m,n)\in S(M)\cap H(\alpha_i)\\M^{\left(\frac{2}{ 3}\right)^{k+1}}<n\leqslant M^{\left(\frac{2}{ 3}\right)^k}}}
 (mn(n-m))^{-\frac{2}{3}}+\sum _{\substack{(m,n)\in S(M)\cap H(\alpha_i)\\n\leqslant \alpha}}(mn(n-m))^{-\frac{2}{3}}\\
 &=\sum_{0\leqslant k\leqslant l-1} F_{k,\alpha_i}(M)+G_{\alpha_i} (M).
 \end{align*}
 Premièrement on a $$G_{\alpha_i}(M)=O_{\alpha,e,u,\varepsilon} (1).$$
 Ensuite on compare chaque morceau $F_{k,e}(M)$ avec l'intégrale.
 Pour cela on va fixer les domaines fondamentaux du réseau $H(\alpha_i)$.
 On peut choisir une base $(e_1(\alpha_i),e_2(\alpha_i))$ engendrant $H(\alpha_i)$ telle que 
 $$\|e_j(\alpha_i)\|\leqslant 2\lambda_j(\alpha_i),\quad (1\leqslant j\leqslant 2)$$
 où $\lambda _j$ désigne le $j$-ième successif minima par rapport à la norme euclidienne standard $\|\cdot\|$ (voir, par exemple, \cite{cassels}, p.135).
 Les termes d'erreur viennent des bords de notre partition et du passage à l'intégrale sur un domaine fondamental.
 On note $\mathbb{B}(M,k)$ la réunion des domaines fondamentaux du réseau $H(\alpha_i)$
 dont l'intersection avec le bord du domaine
 \begin{equation}\label{eq:smalldomain}
 \mathbb{D}(M,k)=\{(x,y)\in T(M):M^{\left(\frac{2}{ 3}\right)^{k+1}}<x\leqslant M^{\left(\frac{2}{ 3}\right)^k}\}
 \end{equation}
 est non-vide.
 On a que
 \begin{align*}\sup_{(x,y)\in\mathbb{B}(M,k)\cap \mathbb{D}(M,k)}(xy(x-y))^{-\frac{2}{3}}
 &\leqslant \sup_{(x,y)\in\mathbb{B}(M,k)\cap \mathbb{D}(M,k)}\left(\frac{y}{x}\left(1-\frac{y}{x}\right)\right)^{-\frac{2}{3}}
 \sup_{M^{\left(\frac{2}{ 3}\right)^{k+1}}<x\leqslant M^{\left(\frac{2}{ 3}\right)^k}}\frac{1}{x^2}\\
 &=O_{u,e,\varepsilon}(1)\sup_{M^{\left(\frac{2}{ 3}\right)^{k+1}}<x\leqslant M^{\left(\frac{2}{ 3}\right)^k}}\frac{1}{x^2}\\
 &=O_{u,e,\varepsilon}(M^{-2\left(\frac{2}{ 3}\right)^{k+1}}).\end{align*}
 Maintenant on fixe $(m,n)\in S(M)\cap H(\alpha_i)$ avec $M^{\left(\frac{2}{ 3}\right)^{k+1}}<n\leqslant M^{\left(\frac{2}{ 3}\right)^k}$.
 On prend un domaine fondamental $\mathcal{F}$ contenant $(m,n)$.
 Soit $(x,y)\in \mathcal{F}$. Alors
 $$(x,y)=(m,n)+se_1+te_2,$$
 avec $s,t\in\mathopen[-1,1\mathclose]$. On note $\lambda(\alpha_i)=2(\lambda_1(\alpha_i)+\lambda_2(\alpha_i))$. D'après l'inégalité de Minkowski, on a
 $$\lambda
 (\alpha_i)\leqslant 4\lambda_1(\alpha_i)\leqslant 8\sqrt{\frac{\operatorname{det}(H(\alpha_i))}{\operatorname{vol}(\mathbf{B}(0,1))}}=\frac{8}{\sqrt{\operatorname{vol}(\mathbf{B}(0,1))}}\sqrt{\alpha_1\alpha_2\alpha_3},$$
 où $\operatorname{vol}(\mathbf{B}(0,1))$ est le volume de la boule unité par rapport à la norme choisie. Donc $$|x-m|\leqslant \|e_1(\alpha_i)\|+\|e_2(\alpha_i)\|\leqslant 2(\lambda_1(\alpha_i)+\lambda_2(\alpha_i))= \lambda(\alpha_i)=O_{\alpha_i}(1),$$ 
 $$|y-n|=O_{\alpha_i}(1).$$
 Comme pour $n>\lambda(\alpha_i)$, 
 $$\frac{m-\lambda(\alpha_i)}{n+ \lambda(\alpha_i)} < \frac{y}{x}< \frac{m+ \lambda(\alpha_i)}{n-\lambda(\alpha_i)},$$
 on a
 $$\left|\frac{y}{x}-\frac{m}{n}\right|\leqslant\max \left( \frac{m}{n}-\frac{m}{n+ \lambda(\alpha_i)},\frac{m+ \lambda(\alpha_i)}{n}- \frac{m}{n}\right)
 =O_{u,e,\varepsilon,\alpha_i}\left(\frac{1}{n}\right).$$
 Considérons la fonction suivante définie sur $]\frac{1}{2}-C_0,\frac{1}{2}+C_0\mathclose [$:
 $$\lambda\longmapsto (\lambda(1-\lambda))^{-\frac{2}{3}}.$$
 En reportant les majorations ci-dessus et en utilisant le théorème de la valeur moyenne, on obtient que
 $$\left|\left(\frac{y}{x}\left(1-\frac{y}{x}\right)\right)^{-\frac{2}{3}}-\left(\frac{m}{n}\left(1-\frac{m}{n}\right)\right)^{-\frac{2}{3}}\right|= O_{u,e,\varepsilon,\alpha}\left(\frac{1}{n}\right).$$
 On a en outre que
 \begin{align*}
  \left|\frac{1}{x^2}-\frac{1}{n^2}\right|=\frac{(x-n)(x+n)}{x^2 n^2}\leqslant \frac{2\lambda(\alpha_i) (n+\lambda(\alpha_i))}{n^4}= O_{\alpha}(M^{-3\left(\frac{2}{ 3}\right)^{k+1}}).
 \end{align*}
Tout cela nous fournit la majoration suivante.
 \begin{align*}
  &|(xy(x-y))^{-\frac{2}{3}}-(mn(n-m))^{-\frac{2}{3}}|\\
  =&\left|\frac{1}{x^2}\left(1-\frac{y}{x}\right)^{-\frac{2}{3}}-\frac{1}{n^2}\left(1-\frac{m}{n}\right)^{-\frac{2}{3}}\right|\\  
  \leqslant &\left|\frac{1}{x^2}\left(\frac{y}{x}\left(1-\frac{y}{x}\right)\right)^{-\frac{2}{3}}-
  \frac{1}{n^2}\left(\frac{y}{x}\left(1-\frac{y}{x}\right)\right)^{-\frac{2}{3}}\right|
  + \left|\frac{1}{n^2}\left(\frac{y}{x}\left(1-\frac{y}{x}\right)\right)^{-\frac{2}{3}}-
  \frac{1}{n^2}\left(\frac{m}{n}\left(1-\frac{m}{n}\right)\right)^{-\frac{2}{3}}\right|\\
=& O_{u,e,\varepsilon,\alpha}(1) \left( \frac{1}{x^2}-\frac{1}{n^2}\right)+O_{u,e,\varepsilon,\alpha}\left(\frac{1}{n^3}\right)\\
=& O_{u,e,\varepsilon,\alpha}(M^{-3\left(\frac{2}{ 3}\right)^{k+1}}).
 \end{align*}
 En remarquant que le périmètre du bord et l'aire du domaine $\mathbb{D}(M,k)$ (\ref{eq:smalldomain}) sont de grandeur 
 $O_{u,e,\varepsilon,\alpha}(M^{\left(\frac{2}{ 3}\right)^{k}})$ et $O_{u,e,\varepsilon,\alpha}(M^{2\left(\frac{2}{ 3}\right)^{k}})$ respectivement,
 il suit des estimations ci-dessus que
 \begin{align*}
 &\left|F_{k,\alpha_i}(M)-\iint_{\substack{(x,y)\in T(M) \\M^{\left(\frac{2}{ 3}\right)^{k+1}}<x\leqslant M^{\left(\frac{2}{ 3}\right)^k}}}
 \frac{\operatorname{d}x\operatorname{d}y}{\alpha(xy(x-y))^{\frac{2}{3}}}\right|\\
 &\ll\left(\sup_{(x,y)\in\mathbb{B}(M,k)\cap \mathbb{D}(M,k)}
 (xy(x-y))^{-\frac{2}{3}} \right)O_{\alpha,e,u,\varepsilon}(M^{\left(\frac{2}{ 3}\right)^{k}})\\
 &\ +O_{u,e,\varepsilon,\alpha}(M^{-3\left(\frac{2}{ 3}\right)^{k+1}})
 \iint_{\substack{(x,y)\in T(M) \\M^{\left(\frac{2}{ 3}\right)^{k+1}}<x\leqslant M^{\left(\frac{2}{ 3}\right)^k}}}\operatorname{d}x\operatorname{d}y\\
 &= O_{u,e,\varepsilon,\alpha}(M^{-\frac{1}{3}\left(\frac{2}{ 3}\right)^{k}})+O_{u,e,\varepsilon,\alpha}(1)\\
 &= O_{\alpha,e,u,\varepsilon}(1).
 \end{align*}
En reportant de ce que nous avons obtenu,  
 \begin{align*}
 &\sum_{(m,n)\in S(M)\cap H(\alpha_i)}
 (mn(n-m))^{-\frac{2}{3}}\\
 &=\sum_{0\leqslant k\leqslant l-1} F_{k,\alpha_i}(M)+G_{\alpha_i} (M)\\
 &=\sum_{0\leqslant k\leqslant l-1}\iint_{\substack{(x,y)\in T(M) \\M^{\left(\frac{2}{ 3}\right)^{k+1}}<x\leqslant M^{\left(\frac{2}{ 3}\right)^k}}}
 \frac{\operatorname{d}x\operatorname{d}y}{\alpha(xy(x-y))^{\frac{2}{3}}}+O_{\alpha,e,u,\varepsilon}(1)\sum_{0\leqslant k\leqslant l} 1 +O_{\alpha,e,u,\varepsilon}(1)\\
 &=\iint_{T(M)}
 \frac{\operatorname{d}x\operatorname{d}y}{\alpha(xy(x-y))^{\frac{2}{3}}}
 +O_{u,\alpha,e,\varepsilon}(\log \log M)\\
 &=\iint_{T(M)}
 \frac{\operatorname{d}x\operatorname{d}y}{\alpha(xy(x-y))^{\frac{2}{3}}}
 +O_{u,\alpha,e,\varepsilon}(\log \log B).
\end{align*}
La seconde égalité se démontre exactement de la même façon,
d'où le lemme.
\end{proof}

On introduit la fonction arithmétique $\psi$ donnée par
\begin{equation}\label{eq:thefunctionpsi}
\psi(n)=\operatorname{Card} K(n)
\end{equation}
où $$K(n)=\{(e_1,e_2,e_3)\in\mathbb{N}^3:n=e_1e_2e_3,\pgcd(e_i,e_j)=1 (\forall i\neq j)\}.$$
\begin{propo}\label{po:thefunctionpsi}
 La fonction $\psi$ est multiplicative. On a $\psi(n)=3^{\omega(n)}$.
\end{propo}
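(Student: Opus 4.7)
The plan is to verify the two assertions in turn, exploiting the fact that pairwise coprimality of the $e_i$ interacts well with the prime factorisation of $n$.

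First I would establish multiplicativity directly from the combinatorial definition of $K(n)$. Assume $\pgcd(m,n)=1$ and consider the map
$$K(m)\times K(n)\longrightarrow K(mn),\qquad \bigl((e_1',e_2',e_3'),(e_1'',e_2'',e_3'')\bigr)\longmapsto (e_1'e_1'',e_2'e_2'',e_3'e_3'').$$
Because $\pgcd(m,n)=1$ the factors $e_i'$ and $e_j''$ are coprime for all $i,j$, so pairwise coprimality is preserved, and the map lands in $K(mn)$. Conversely, given $(e_1,e_2,e_3)\in K(mn)$, set $e_i'=\pgcd(e_i,m)$ and $e_i''=\pgcd(e_i,n)$. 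Then $e_i=e_i'e_i''$ because $\pgcd(m,n)=1$, and the triples $(e_i')$ and $(e_i'')$ are pairwise coprime decompositions of $m$ and $n$ respectively. This is the inverse of the previous map, so $\psi(mn)=\psi(m)\psi(n)$, and $\psi(1)=1$ is immediate.

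Next I would compute $\psi$ on prime powers. If $n=p^k$ with $k\geqslant 1$, then any $(e_1,e_2,e_3)\in K(p^k)$ must consist of powers of $p$, and pairwise coprimality forces at most one of them to be divisible by $p$. Since $e_1e_2e_3=p^k$ with $k\geqslant 1$, exactly one $e_i$ equals $p^k$ and the other two equal $1$, giving $\psi(p^k)=3$. Combining this with multiplicativity yields
$$\psi(n)=\prod_{p\mid n}\psi(p^{v_p(n)})=\prod_{p\mid n}3=3^{\omega(n)}.$$

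There is no real obstacle here; the only thing to watch is the verification that the map $(e_i')\mapsto \pgcd(e_i,m)$ really inverts the product map, which rests solely on the coprimality of $m$ and $n$. Everything else is direct bookkeeping.
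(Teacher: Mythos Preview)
Your proof is correct and follows essentially the same approach as the paper: the same product map $K(m)\times K(n)\to K(mn)$ for multiplicativity, followed by the computation on prime powers. You supply more detail than the paper does, notably the explicit inverse $e_i\mapsto(\pgcd(e_i,m),\pgcd(e_i,n))$ and the reasoning for $\psi(p^k)=3$, which the paper leaves implicit.
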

\begin{proof}[Démonstration]
 On se donne $m,n\in\mathbb{N}$ avec $\pgcd(m,n)=1$. L'application
 $$K(m)\times K(n)\longrightarrow K(mn)$$
 $$(e_1,e_2,e_3)\times (f_1,f_2,f_3)\mapsto(e_1f_1,e_2f_2,e_3f_3)$$
 est une bijection.
 Donc $$\psi(n)=\prod_{p|n} \psi(p^{v_p(n)})=\prod_{p|n} 3=3^{\omega(n)}.$$
\end{proof}

\begin{proof}[Démonstration de la proposition \ref{po:globalcasi}]
On suppose que
\begin{equation}\label{eq:conditioncc}
\pgcd\left(u,\frac{ab(a+b)}{e}\right)=1.
\end{equation}
 On utilise la formule classique (voir \cite{Browning}, Exercise 5.2) pour l'intervalle (\ref{eq:interval}), compte tenu des conditions \textcircled{a} (\ref{eq:conditionaa}),\textcircled{b} (\ref{eq:conditionbb}) et du fait que la condition \textcircled{b} équivaut à la condition de l'ensemble $\Gamma^\prime(a,b)$ (\ref{eq:latticecondition}) plus la condition (\ref{eq:conditioncc}), 
 \begin{equation}\label{eq:classique}
 \begin{split}
  \operatorname{Card}(F_{u,e_i,\varepsilon}(a,b))&=\frac{B^{\frac{1}{3}}((e^{-1}|a|b(a+b))^{\frac{1}{3}}-ub\varepsilon^{-1})}{e^{-1}|a|b(a+b)}\phi(eu)+O(2^{\omega(eu)})+O_{u,e}(1)\\
  &=B^{\frac{1}{3}}\phi(eu)\left(e^{\frac{2}{3}}(|a|b(a+b))^{-\frac{2}{3}}-eu\varepsilon^{-1}(|a|(a+b))^{-1}\right)+O_{u,e}(1).
 \end{split}\end{equation}
 Le terme d'erreur peut être contrôlé par
 $$\sum_{\substack{
   \pgcd(a,b)=1, b\leqslant DB^{\frac{1}{6}}\\
   F_{u,e_i,\varepsilon}(a,b)\neq \varnothing  
}}1\leqslant \sum_{\substack{
    b\leqslant DB^{\frac{1}{6}}\\
    |\frac{a}{b}|\in \mathopen ]\frac{1}{2}-C_0,\frac{1}{2}+C_0\mathclose [
}} 1=O_{u,e,\varepsilon}(B^{\frac{1}{3}}).$$

On réécrit la somme dans la Proposition \ref{po:globalcasi} pour éliminer la condition de pgcd (\ref{eq:conditioncc}).

\begin{align*}
 &\sum_{\substack{
   \pgcd(a,b)=1,b\leqslant DB^{\frac{1}{6}}\\
   (a,b)\in H(e_i) ,\pgcd(u,\frac{ab(a+b)}{e})=1\\\left|\frac{a}{b}\right|\in\mathopen [\frac{1}{2}-C_1,\frac{1}{2}+C_1\mathclose ]
}}(|a|b(a+b))^{-\frac{2}{3}}\\
&=\sum_{k|u}\mu(k)\sum_{\substack{k_1k_2k_3=k\\ \pgcd(k_i,k_j)=1}}\sum_{\substack{\pgcd(a,b)=1,b\leqslant DB^{\frac{1}{6}}\\ e_1k_1|a,e_2k_2|b,e_3k_3|a+b\\\left|\frac{a}{b}\right|\in\mathopen [\frac{1}{2}-C_1,\frac{1}{2}+C_1\mathclose ]
}}(|a|b(a+b))^{-\frac{2}{3}}.
\end{align*}
 Pour $k_1,k_2,k_3$ fixés,
 \begin{equation}\label{eq:somme K}
\begin{split}
&\sum_{\substack{\pgcd(a,b)=1,b\leqslant DB^{\frac{1}{6}}\\ e_1k_1|a,e_2k_2|b,e_3k_3|a+b\\\left|\frac{a}{b}\right|\in\mathopen [\frac{1}{2}-C_1,\frac{1}{2}+C_1\mathclose ]
}}(|a|b(a+b))^{-\frac{2}{3}}\\
&=\sum_{d\in\mathbb{N}}\mu(d)\sum_{\substack{d|a,d|b,b\leqslant DB^{\frac{1}{6}}\\(a,b)\in H(e_ik_i)\\|\frac{a}{b}|\in \mathopen [\frac{1}{2}-C_1,\frac{1}{2}+C_1\mathclose ]}}
(|a|b(a+b))^{-\frac{2}{3}}\\
&=\left(\sum_{d\leqslant D(ek)^{-1}B^{\frac{1}{6}}}+\sum_{d> D(ek)^{-1}B^{\frac{1}{6}}}\right)\frac{\mu(d)}{d^2}\sum_{{\substack{1\leqslant a< b\leqslant DB^{\frac{1}{6}}d^{-1}\\ |\frac{a}{b}|\in 
\mathopen [\frac{1}{2}-C_1,\frac{1}{2}+C_1\mathclose ]\\(a,b)\in H(e_ik_i) }}}
(|a|b(a+b))^{-\frac{2}{3}}.
\end{split} \end{equation}
La seconde somme est bornée. En écrivant \og$O$\fg\  pour \og$O_{u,e,k,\varepsilon}$\fg, le Lemme \ref{le:echange} nous fournit que la somme ci-dessus est égale à 
\begin{align*}
&\sum_{d\leqslant D(ek)^{-1}B^{\frac{1}{6}}} \frac{\mu(d)}{d^2}\left(\iint_{T(DB^{\frac{1}{6}}d^{-1})}
 \frac{\operatorname{d}x\operatorname{d}y}{ek(xy(x-y))^{\frac{2}{3}}}+O(\log \log B)\right)+O(1)\\
 &=\sum_{d\leqslant D(ek)^{-1}B^{\frac{1}{6}}}\frac{\mu(d)}{ek d^2} \iint_{\substack{1\leqslant x\leqslant DB^{\frac{1}{6}}d^{-1}\\\theta\in \mathopen ]\frac{1}{2}-C_0,\frac{1}{2}+C_0\mathclose [}}
 \frac{\operatorname{d}\theta\operatorname{d}x}{x(\theta(1-\theta))^{\frac{2}{3}}}+O(\log \log B)\\
 &=\sum_{d\leqslant D(ek)^{-1}B^{\frac{1}{6}}}\frac{\mu(d)}{ek d^2}\left(\frac{1}{6}\log B+\log D-\log d\right)\int_{\theta\in \mathopen ]\frac{1}{2}-C_0,\frac{1}{2}+C_0\mathclose [} 
 \frac{\operatorname{d}\theta}{(\theta(1-\theta))^{\frac{2}{3}}}+O(\log \log B).\\
 \end{align*}
 Comme précédemment on peut rajouter une quantité bornée pour obtenir une somme où $d$ parcourt l'ensemble des entiers naturels.
 Puisque l'on a des formules bien connues
$$\sum_{d\in\mathbb{N}}\frac{\log d}{d^2}<\infty,\quad \sum_{d\in\mathbb{N}}\frac{\mu(d)}{d^2}=\frac{6}{\pi^2},$$
la somme (\ref{eq:somme K}) est finalement de la forme
\begin{align*}
 &=\sum_{d\in\mathbb{N}}\frac{\mu(d)}{ek d^2}\frac{1}{6}\log B\int_{\theta\in \mathopen ]\frac{1}{2}-C_0,\frac{1}{2}+C_0\mathclose [} \frac{\operatorname{d}\theta}{(\theta(1-\theta))^{\frac{2}{3}}}+O(\log \log B)\\
&= (ek)^{-1}\pi^{-2}\int_{\theta\in \mathopen ]\frac{1}{2}-C_0,\frac{1}{2}+C_0\mathclose [} \frac{\operatorname{d}\theta}{(\theta(1-\theta))^{\frac{2}{3}}}\log B +O(\log \log B).
\end{align*}

De la même façon,
\begin{align*}
 &\sum_{\substack{
    \pgcd(a,b)=1\\ b\leqslant DB^{\frac{1}{6}}\\
   (a,b)\in H(e_ik_i) 
}}(|a|(a+b))^{-1}\\
&=\frac{1}{ek\pi^2}\int_{\theta\in \mathopen ]\frac{1}{2}-C_0,\frac{1}{2}+C_0\mathclose [} \frac{\operatorname{d}\theta}{(\theta(1-\theta))}\log B +O(\log \log B)
\end{align*}

Il en résulte que, en rappelant la définition de la fonction $\psi$ \eqref{eq:thefunctionpsi} et la Proposition \ref{po:thefunctionpsi},
\begin{align*}
 &\sum_{\substack{
   \pgcd(a,b)=1,b\leqslant DB^{\frac{1}{6}}\\
   (a,b)\in H(e_i) ,\pgcd(u,\frac{ab(a+b)}{e})=1\\\left|\frac{a}{b}\right|\in\mathopen [\frac{1}{2}-C_1,\frac{1}{2}+C_1\mathclose ]
}}\sharp F_{u,e_i,\varepsilon}(a,b)\\
&=B^{\frac{1}{3}}\phi(eu)\left(\sum_{k|u} \mu(k)\psi(k)\sum_{\substack{
   \pgcd(a,b)=1,b\leqslant DB^{\frac{1}{6}}\\
   (a,b)\in H(e_ik_i) \\\left|\frac{a}{b}\right|\in\mathopen [\frac{1}{2}-C_1,\frac{1}{2}+C_1\mathclose ]
}}\left(e^{\frac{2}{3}}(|a|b(a+b))^{-\frac{2}{3}}-eu\varepsilon^{-1}(|a|(a+b))^{-1}\right)\right)+O(B^{\frac{1}{3}})\\
&=B^{\frac{1}{3}}\log B\left(\phi(eu)\sum_{k|u}\mu(k)3^{\omega(k)}\left(\frac{1}{e^{\frac{1}{3}}k\pi^2}\int_{\theta\in \mathopen ]\frac{1}{2}-C_0,\frac{1}{2}+C_0\mathclose [} \frac{\operatorname{d}\theta}{(\theta(1-\theta))^{\frac{2}{3}}}-\frac{u}{\varepsilon k \pi^2}\int_{\theta\in \mathopen ]\frac{1}{2}-C_0,\frac{1}{2}+C_0\mathclose [} \frac{\operatorname{d}\theta}{(\theta(1-\theta))}\right)\right)\\&+O(B^{\frac{1}{3}}\log\log B)\\
&=\frac{Z(\varepsilon,u,e)}{\pi^2}B^{\frac{1}{3}}\log B + O(B^{\frac{1}{3}}\log \log B),
\end{align*}
ce qui clôt la démonstration.
\end{proof}
On termine en remarquant que le terme principal ne dépend pas de la constante $D$. 
Dans la suite, nous utiliseront le lemme suivant qui montre directement cette indépendance.
\begin{lemme} \label{le:constante}
Fixons deux constantes $C_1,C_2>0$, on a
$$\sum_{\substack{\pgcd(a,b)=1\\C_2 B^{\frac{1}{6}}\leqslant b\leqslant C_1 B^{\frac{1}{6}} }}\sharp  F_{u,e_i,\varepsilon}(a,b)=O_{C_1,C_2,u,e,\varepsilon}(B^{\frac{1}{3}}).$$  \end{lemme}
\begin{proof}[Démonstration]
 En fait ce lemme découle du calcul précédent dont le terme d'erreur a la grandeur $O(B^{\frac{1}{3}}\log\log B)$, qui est acceptable pour la suite. On donne une preuve directe avec la grandeur de contrôle souhaitée.
 D'après la formule (\ref{eq:classique}), il suffit de contrôler la somme sur les quantités $(|a|b(a+b))^{-\frac{2}{3}}$ et $ (|a|(a+b))^{-1}$.
On a
 \begin{align*}
   &\sum_{\substack{C_2 B^{\frac{1}{6}}\leqslant b\leqslant C_1 B^{\frac{1}{6}}\\|\frac{a}{b}|\in ]\frac{1}{2}-C_0,\frac{1}{2}+C_0[}}
(|a|b(a+b))^{-\frac{2}{3}}\\
&\leqslant 
\sup_{\substack{C_2 B^{\frac{1}{6}}\leqslant b\leqslant C_1 B^{\frac{1}{6}}\\|\frac{a}{b}|\in \mathopen ]\frac{1}{2}-C_0,\frac{1}{2}+C_0\mathclose [}}
 (|a|b(a+b))^{-\frac{2}{3}} \sum_{\substack{C_2 B^{\frac{1}{6}}\leqslant b\leqslant C_1 B^{\frac{1}{6}}\\|\frac{a}{b}|\in \mathopen ]\frac{1}{2}-C_0,\frac{1}{2}+C_0\mathclose [}} 1\\
 &=O_{C_1,C_2}(B^{-\frac{1}{3}})O_{C_1,C_2}(B^{\frac{1}{3}})=O_{C_1,C_2}(1).
 \end{align*}
 De façon similaire, 
   $$\sum_{\substack{C_2 B^{\frac{1}{6}}\leqslant b\leqslant C_1 B^{\frac{1}{6}}\\|\frac{a}{b}|\in \mathopen ]\frac{1}{2}-C_0,\frac{1}{2}+C_0\mathclose [}}
   (|a|(a+b))^{-1}=O_{C_1,C_2}(1).$$
  Notons que le nombre de $(a,b)$ possibles est $O(B^{\frac{1}{3}})$. On obtient le résultat souhaité.
\end{proof}

\textbf{Cas II}. $b\geqslant DB^{\frac{1}{6}}$.

Dans ce cas là, le terme principal et le terme d'erreur ont le même ordre de grandeur
(du point de vue de la démonstration de la Proposition \ref{po:globalcasi}).  
Mais grâce à la symétrie de la transformation de Cremona on peut se ramener presque totalement au cas précédent. Dans la section 3.2 on a défini les paramètres correspondant aux points dans $S$ en échangeant $x$ avec $z$. De façon similaire on peut fixer le difféomorphisme $\tilde{\rho}$ donné par
$$[x:y:z]\longmapsto\left(\frac{z}{x}-1,\frac{y}{x}-1\right)\in\mathbb{R}^2,$$ qui s'identifie $S$ avec son image dans $T_Q X_3$. On redéfinit l'ensemble $F(u,e,\varepsilon)$ (\ref{eq:coutingwithfixedparameters}) pour la région $S$ (\ref{regions}) comme suit:
   \begin{align}\label{eq:coutingwithfixedparameters2}
   F^\prime(u,e_i,\varepsilon) &=\left\{
   \begin{aligned}
   &P=[x:y:z]\\
   &z>y>x>0\\
   &\pgcd(x,y,z)=1
   \end{aligned}
   \left|
   \begin{aligned}
   &\left|\frac{z}{x}-1\right|\leqslant \varepsilon B^{-\frac{1}{3}}, u(P)=u,\mathbf{e}(P)=(e_3,e_2,e_1);\\
   &\frac{\max(|x|,|y|)\max(|x|,|z|)\max(|y|,|z|)}{\pgcd(x,y)\pgcd(x,z)\pgcd(x,z)}\leqslant B
   \end{aligned}
   \right\}\right.\end{align}
   D'après les Propositions \ref{po:Cremona1} et \ref{po:Cremona2}, la transformation de Cremona $\Psi$ échange les points dans $F(u,e_i,\varepsilon)$ avec ceux dans $F^\prime(u,e_i,\varepsilon)$. Et symétriquement on a des résultats analogues à ceux du début de la Section 4. En particulier (voir (\ref{eq:slope})), pour $U\in F^\prime(u,e_i,\varepsilon)$, on a
   $$\mu^\prime(U)=\frac{m^\prime(U)}{n^\prime(U)}\in \left] \frac{1}{2}-C_0,\frac{1}{2}+C_0\right[.$$
\begin{lemme}\label{le:Cremona}
 On note $$E=E(\varepsilon,u,e)=\frac{e^{\frac{2}{3}}}{(\frac{1}{2}-C_0)^2 4^{\frac{1}{3}}D},
 \quad G=G(\varepsilon,e,u)=\frac{ue}{\varepsilon(\frac{1}{2}+C_0)^2D},$$ 
$$A_1=\{V\in F(u,e_i,\varepsilon):n(V)>DB^{\frac{1}{6}}\},$$
$$A_2^\prime=\{U\in  F^\prime(u,e_i,\varepsilon): n^\prime(U)\leqslant EB^{\frac{1}{6}}\}, $$
$$A_3^\prime=\{U\in  F^\prime(u,e_i,\varepsilon): n^\prime(U)\leqslant GB^{\frac{1}{6}}\}.$$
Alors on a pour $B\gg_{u,e,\varepsilon} 1$,
 \begin{equation}
   A_3^\prime\subseteq \Psi(A_1)\subseteq A_2^\prime,
 \end{equation}
\end{lemme}

\begin{proof}[Démonstration]

 On note $\lambda=-\frac{a}{b}(1+\frac{a}{b})$.
 Fixons $V\in A_1$ avec ses paramètres $a,b,v,u$ et notons $U=\Psi(V)\in F^\prime(u,e_i,\varepsilon)$.
 D'après la Proposition (\ref{po:Cremona2}), 
  \begin{equation}
   m^\prime(U)=-\frac{e_1e_2e_3v}{ab},\quad n^\prime(U)=\frac{e_1e_2e_3(-ua+v)}{-a(a+b)}.
  \end{equation}
 La condition $\textcircled{a}$ (\ref{eq:conditionaa}) fournit que
 \begin{equation}
  \frac{uB^{\frac{1}{3}}}{\varepsilon}\leqslant\frac{v}{b}<\frac{B^{\frac{1}{3}}\lambda^{\frac{1}{3}}}{e^\frac{1}{3}},
 \end{equation}
 d'où
 \begin{equation}\label{eq:relationpente}
 \frac{ueB^{\frac{1}{3}}}{|a|\varepsilon}\leqslant m^\prime(U) <\frac{e^{\frac{2}{3}}B^{\frac{1}{3}}\lambda^{\frac{1}{3}}}{|a|}.
 \end{equation}
 et donc 
 $$m^\prime(U)<\frac{e^{\frac{2}{3}}B^{\frac{1}{3}}}{(\frac{1}{2}-C_0)4^{\frac{1}{3}}b}\leqslant 
 \frac{e^{\frac{2}{3} }B^{\frac{1}{6}}}{(\frac{1}{2}-C_0)4^{\frac{1}{3}}D},$$
 $$n^\prime(U)<\frac{m^\prime(U)}{\frac{1}{2}-C_0}<\frac{e^{\frac{2}{3} }B^{\frac{1}{6}}}{(\frac{1}{2}-C_0)^24^{\frac{1}{3}}D}=EB^{\frac{1}{6}}$$
 Cela montre que 
 \begin{align*}
  \Psi(A_1)\subseteq A_2^\prime.
 \end{align*}
En revanche, 
 pour tout $U\in A_3^\prime$, on a de la même façon que (\ref{eq:relationpente}),
 $$m(\Psi(U))\geqslant \frac{ueB^{\frac{1}{3}}}{\varepsilon m^\prime(U)}\geqslant \frac{ueB^{\frac{1}{6}}}{\varepsilon(\frac{1}{2}+C_0) G},$$
 et donc
 $$n(\Psi(U))> \frac{m(\Psi(U))}{\frac{1}{2}+C_0}> \frac{ueB^{\frac{1}{6}}}{\varepsilon(\frac{1}{2}+C_0)^2G}=DB^{\frac{1}{6}}.$$
 Cela montre que
 $$\Psi(A_3^\prime)\subseteq A_1.$$
 Puisque $\Psi^2=$ Id, on obtient que
 $$A_3^\prime\subseteq\Psi(A_1).$$
\end{proof}

On peut maintenant prouver notre résultat.
\begin{proof}[Démonstration de la Proposition \ref{po:global}]
 Par symétrie, l'étude de la convergence de la suite de mesures de Dirac dans la région $S$ revient au même problème dans la région $R$. Pour $u^\prime,e_i^\prime, a,b$ fixés, on définit l'ensemble  $F^\prime_{u^\prime,e_i^\prime,\varepsilon}(a,b)$ comme un sous-ensemble de $F^\prime(u^\prime,e_i^\prime,\varepsilon)$ (\ref{eq:coutingwithfixedparameters2}) de la même manière que pour (\ref{eq:conditionaa})(\ref{eq:conditionbb}).
 Le lemme précédent fournit que
$$\sum_{\substack{(a,b)\in H(e_i^\prime)\\
   \pgcd(a,b)=1\\ b\leqslant GB^{\frac{1}{6}}}}\\\sharp F^\prime_{u^\prime,e_i^\prime,\varepsilon}(a,b)\leqslant
   \sum_{\substack{(a,b)\in H(e_i)\\
   \pgcd(a,b)=1\\ b> DB^{\frac{1}{6}}}}\\\sharp F_{u,e_i,\varepsilon}(a,b)\leqslant
   \sum_{\substack{(a,b)\in H(e_i^\prime)\\
   \pgcd(a,b)=1\\ b\leqslant EB^{\frac{1}{6}}}}\\\sharp F^\prime_{u^\prime,e_i^\prime,\varepsilon}(a,b).$$
   D'après la Proposition \ref{po:Cremona2}, $(e_1^\prime,e_2^\prime,e_3^\prime)=(e_3,e_2,e_1)$, $u^\prime=u$ et donc
   $$e=e_1e_2e_3=e_1^\prime e_2^\prime e_3^\prime.$$
   D'après le Lemme \ref{le:constante}, 
   $$\sum_{\substack{(a,b)\in H(e_i^\prime)\\
   \pgcd(a,b)=1\\ b\leqslant EB^{\frac{1}{6}}}}\\\sharp F^\prime_{u^\prime,e_i^\prime,\varepsilon}(a,b)-
   \sum_{\substack{(a,b)\in H(e_i^\prime)\\
   \pgcd(a,b)=1\\ b\leqslant GB^{\frac{1}{6}}}}\\\sharp F^\prime_{u^\prime,e_i^\prime,\varepsilon}(a,b)=O_{u,e,\varepsilon}(B^{\frac{1}{3}}).$$
   Il découle de la Proposition \ref{po:globalcasi} que 
$$\sum_{\substack{(a,b)\in H(e_i^\prime)\\
   \pgcd(a,b)=1\\ b\leqslant GB^{\frac{1}{6}}}}\sharp F^\prime_{u^\prime,e_i^\prime,\varepsilon}(a,b)
   =\frac{Z(\varepsilon,u,e)}{\pi^2}B^{\frac{1}{3}}\log B + O_{u,e,\varepsilon}(B^{\frac{1}{3}}\log \log B).
$$
Ainsi
\begin{align*}
 &\sum_{\substack{(a,b)\in H(e_i)\\ 0<-a<b\\ \pgcd(a,b)=1}}\sharp F_{u,e_i,\varepsilon}(a,b)
 =\sum_{\substack{(a,b)\in H(e_i)\\
   \pgcd(a,b)=1\\ b\leqslant DB^{\frac{1}{6}},
    F_{u,e_i,\varepsilon}(a,b)\neq \varnothing  
}}+\sum_{\substack{(a,b)\in H(e_i) \\
   \pgcd(a,b)=1\\ b> DB^{\frac{1}{6}},
   F_{u,e_i,\varepsilon}(a,b)\neq \varnothing  
}}\sharp (F_{u,e_i,\varepsilon}(a,b))\\
&=2\sum_{\substack{(a,b)\in H(e_i)\\
   \pgcd(a,b)=1\\ b\leqslant DB^{\frac{1}{6}},
   F_{u,e_i,\varepsilon}(a,b)\neq \varnothing  
}}\sharp (F_{u,e_i,\varepsilon}(a,b))+O_{u,e,\varepsilon}(1)\\
 &=2\frac{Z(\varepsilon,u,e)}{\pi^2}B^{\frac{1}{3}}\log B + O_{u,e,\varepsilon}(B^{\frac{1}{3}}\log\log B).
\end{align*}

\end{proof}
Enfin pour obtenir la formule de $\rho_{B,Q}(\chi(\varepsilon))$, il ne reste qu'à sommer sur l'ensemble fini des $u,e_i$ possibles.

 \section{Détermination de la mesure asymptotique}
 
Pour $u,e_1,e_2,e_3$ fixé,
en vertu de ce que nous avons obtenu, asymptotiquement tous les points sont dans l'intérieur de la région 
\begin{align*}
T_{u^3e}&=\left\{(s,t)\in\mathbb{R}^2: s\left(\frac{1}{2}-\frac{1}{2}\sqrt{1-\frac{4eu^3}{s^3}}\right)<
t<s\left(\frac{1}{2}+\frac{1}{2}\sqrt{1-\frac{4eu^3}{s^3}}\right)\right\}\\
&=\{(s,t)\in\mathbb{R}^2:st(s-t)>u^3e\}.
\end{align*}
Pour déterminer la mesure asymptotique, it suffit de trouver son expression sur un ensemble de domaines simples qui permettent de tester la convergence de la famille de mesures.
Pour cela on commence par considérer les fonctions caractéristique des domaines de la forme suivante.
Pour $\varepsilon_2>\varepsilon_1>(4e)^{\frac{1}{3}}u$ et $\tau_2,\tau_1\in \mathopen[\frac{1}{2},\frac{1}{2}+C_0(u,e,\varepsilon_1)\mathclose[,\tau_2>\tau_1$ 
on considère la région
$$T_{\varepsilon_1,\varepsilon_2,\tau_1,\tau_2}=\{(s,t)\in T_{u^3e}: \tau_1<\frac{t}{s}<\tau_2,\varepsilon_1<s<\varepsilon_2\}.$$
Pour calculer le nombre total des points rationnels dans cette région,
il suffit de compter l'ensemble suivant.
Pour $\varepsilon>(4e)^{\frac{1}{3}}u$ et $\tau\in \mathopen ]\frac{1}{2},\frac{1}{2}+C_0\mathclose[$, on définit
 \begin{equation}
       F_{\tau,\varepsilon,u,e_i}=\left\{
 \begin{aligned}
  &P=[x:y:z]\\
  &x>y>z>0\\
  &\pgcd(x,y,z)=1
 \end{aligned}
\left|
\begin{aligned}
 &\left|\frac{x}{z}-1\right|\leqslant \varepsilon B^{-\frac{1}{3}}\\
 &H(P)\leqslant B,
 \mu(P)>\tau\\
 &u(P)=u,\e(P)=(e_1,e_2,e_3)
\end{aligned}
\right\}\right.
      \end{equation}
Alors la distribution locale de ce petit domaine peut être calculée par la différence de ceux des quatre domaines suivants: 
\begin{equation}\label{eq:penteetdistance}
 \varrho_{B,Q}(\chi_{T_{\varepsilon_1,\varepsilon_2,\tau_1,\tau_2}})=
\sum_{u,e_i}\left(\sharp F_{\tau_1,\varepsilon_2,u,e_i}-\sharp F_{\tau_2,\varepsilon_2,u,e_i}
-\sharp F_{\tau_1,\varepsilon_1,u,e_i}+\sharp F_{\tau_2,\varepsilon_1,u,e_i}\right).
\end{equation}
    En utilisant notre paramétrisation des droites, on a comme précédemment
      $$\sharp F_{\tau,\varepsilon,u,e}
=\sum_{\substack{0<-a<b,-\frac{a}{b}>\tau\\\pgcd(a,b)=1\\e_1|a,e_2|b,e_3|a+b}}F_{u,e_i,\varepsilon}(a,b).$$
 \begin{propo}\label{po:pente1}
 On a pour $u,e_1,e_2,e_3$ fixé,
  \begin{equation}
   \sum_{\substack{0<-a<b,-\frac{a}{b}>\tau\\\pgcd(a,b)=1\\e_1|a,e_2|b,e_3|a+b}}F_{u,e_i,\varepsilon}(a,b)
   =\frac{2}{\pi^2}Z(\tau,u,e,\varepsilon)B^{\frac{1}{3}}\log B+O_{\tau,u,e,\varepsilon}(B^{\frac{1}{3}}\log \log B).
  \end{equation}
  où
  $$Z(\tau,u,e,\varepsilon)=\sum_{k|u}\frac{\phi(ue)\mu(k)3^{\omega(k)}}{ k}\left(\frac{1}{e^{\frac{1}{3}}}\int_{\theta\in \mathopen ]\tau,\frac{1}{2}+C_0\mathclose [} \frac{\operatorname{d}\theta}{(\theta(1-\theta))^{\frac{2}{3}}}-\frac{u}{\varepsilon}\int_{\theta\in \mathopen ]\tau,\frac{1}{2}+C_0\mathclose [} \frac{\operatorname{d}\theta}{(\theta(1-\theta))}\right).$$
\end{propo}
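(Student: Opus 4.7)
The plan is to mimic the proof of Proposition \ref{po:global} with the extra slope restriction $-a/b > \tau$ imposed throughout, splitting as in \eqref{eq:decomp} into Case~I ($b \leqslant DB^{1/6}$) and Case~II ($b > DB^{1/6}$).

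In Case~I the proof of Proposition \ref{po:globalcasi} carries over verbatim: the Möbius inversion removing \eqref{eq:conditioncc}, the classical lattice-point count \eqref{eq:classique} producing the factor $\phi(ue)$, and the passage from sums to integrals via Lemma \ref{le:echange} are all unchanged. Only the integration range of the slope variable $\theta$ shrinks from $\mathopen]\tfrac{1}{2}-C_0,\tfrac{1}{2}+C_0\mathclose[$ to $\mathopen]\tau,\tfrac{1}{2}+C_0\mathclose[$, and this produces the contribution $\frac{Z(\tau,u,e,\varepsilon)}{\pi^2} B^{1/3}\log B + O(B^{1/3}\log\log B)$.

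In Case~II, I would again transfer the count to region $S$ via Cremona, as in the proof of Proposition \ref{po:global}. Lemma \ref{le:Cremona} provides the almost-bijection between the relevant subsets of $R$ and $S$, and Proposition \ref{po:Cremona2} gives the exact formula $\mu'(\Psi(V)) = \frac{(a+b)v}{b(v-ua)} = 1 - \mu(V) + O_{u,\varepsilon}(B^{-1/3})$ on the admissible range \eqref{eq:conditionaa} of $v$. Hence the slope condition $\mu(V) > \tau$ in $R$ translates to $\mu'(U) < 1 - \tau$ in $S$ up to a boundary strip of slope-width $O(B^{-1/3})$. A Farey-type count shows this strip contains at most $O(B^{1/6})$ admissible pairs $(a,b)$, each contributing $O(B^{1/3}/b^2)$ to $\sharp F$, for a total defect of $O(B^{1/3})$ absorbed in the error term. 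Applying Case~I on the $S$-side with the transported constraint then yields a main term with $\theta$ integrated over $\mathopen]\tfrac{1}{2}-C_0, 1-\tau\mathclose[$; because both integrands $(\theta(1-\theta))^{-2/3}$ and $(\theta(1-\theta))^{-1}$ are invariant under $\theta \mapsto 1-\theta$, the change of variable identifies this with a second copy of $\frac{Z(\tau,u,e,\varepsilon)}{\pi^2} B^{1/3}\log B$.

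The main obstacle is exactly the boundary control in Case~II: since $\mu'(\Psi(V)) = 1 - \mu(V)$ holds only up to an $O(B^{-1/3})$ error, one must certify that the set of $(a,b)$-pairs on which the two slope conditions disagree contributes negligibly. This follows from the Farey-type estimate described above combined with Lemma \ref{le:constante}, keeping the total defect at $O(B^{1/3})$, well within the error margin $O(B^{1/3}\log\log B)$. Summing Case~I and Case~II then yields the asserted $\frac{2}{\pi^2}Z(\tau,u,e,\varepsilon)B^{1/3}\log B$.
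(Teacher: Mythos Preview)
Your proposal is correct and follows essentially the same route as the paper's proof: split at $b=DB^{1/6}$, run the Case~I argument of Proposition~\ref{po:globalcasi} with the slope range truncated to $\mathopen]\tau,\tfrac12+C_0\mathclose[$, and handle Case~II by transporting through Cremona using $\mu'(\Psi(V))=1-\mu(V)+O(B^{-1/3})$ together with the symmetry $\theta\mapsto 1-\theta$ of the integrands. The only cosmetic difference is in the boundary bookkeeping: the paper packages the slope transformation into a separate sandwich lemma (Lemma~\ref{le:Cremona3}, giving $B_3'\subseteq\Psi(B_1)\subseteq B_2'$ with slope cutoffs $1-\tau$ and $1-\tau-2\varepsilon B^{-1/3}$) and then computes both bounds via a slope-restricted analogue of Lemma~\ref{le:echange} (Lemma~\ref{le:echange2}), whereas you estimate the defect of the $O(B^{-1/3})$-strip directly; both give an $O(B^{1/3})$ discrepancy absorbed in the error term.
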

Toutes les méthodes que nous avons développées peuvent être appliquées. En outre, il faut étudier le comportement de la pente sous la transformation de Cremona.
\begin{lemme}\label{le:Cremona3}
 Conservons les notations $E=E(u,e,\varepsilon)$ et $G=G(u,e,\varepsilon)$ du Lemme \ref{le:constante}, et $F(u,e_i,\varepsilon)$, $F^\prime(u,e_i,\varepsilon)$ définis par (\ref{eq:coutingwithfixedparameters}) et (\ref{eq:coutingwithfixedparameters2}) respectivement. Soient
 $$B_1=\{V\in F(u,e_i,\varepsilon):n(V)>DB^{\frac{1}{6}},\mu(V)>\tau\},$$
$$B_2^\prime=\{U\in  F^\prime(u,e_i,\varepsilon): n^\prime(U)\leqslant EB^{\frac{1}{6}},\mu^\prime(U)<1-\tau\}, $$
$$B_3^\prime=\{U\in  F^\prime(u,e_i,\varepsilon): n^\prime(U)\leqslant GB^{\frac{1}{6}},\mu^\prime(U)<1-\tau-2\varepsilon B^{-\frac{1}{3}}\}.$$
Alors on a pour $B\gg_{u,e,\varepsilon} 1$,
 \begin{equation}
   B_3^\prime\subseteq \Psi(B_1)\subseteq B_2^\prime.
 \end{equation}
\end{lemme}
\begin{proof}[Démonstration]
	Il suffit de le vérifier pour la pente grâce aux Proposition \ref{po:Cremona1} et \ref{po:Cremona2}.  
	Prenons $V=[x:y:z]\in B_1$ et notons $\Psi(V)=U\in S$. Puisque $x>y>z$, on a
	$$\mu^\prime(U)=\frac{xz-yz}{xy-yz}=\frac{z}{y}\frac{x-y}{x-z}=\frac{z}{y}(1-\mu(V))<1-\tau.$$
	Cela démontre que $\Psi(B_1)\subseteq B_2^\prime$.
	Maintenant prenons $U=[x^\prime :y^\prime:z^\prime]\in B_3^\prime$ et notons $\Psi(U)=V\in R$.
	Puisque $$\frac{y^\prime}{x^\prime}<\frac{z^\prime}{x^\prime}\leqslant 1+\varepsilon B^{-\frac{1}{3}},$$ on a 
	$$\frac{x^\prime}{y^\prime}>\frac{1}{1+\varepsilon B^{-\frac{1}{3}}}>1-\varepsilon B^{-\frac{1}{3}}.$$ Donc
	$$\mu(V)=\frac{x^\prime z^\prime -x^\prime y^\prime}{y^\prime z^\prime-x^\prime y^\prime}=\frac{x^\prime}{y^\prime}\frac{z^\prime-y^\prime}{z^\prime-x^\prime}=\frac{x^\prime}{y^\prime}(1-\mu^\prime(U))>(\tau+2\varepsilon B^{-\frac{1}{3}})(1-\varepsilon B^{-\frac{1}{3}})>\tau.$$
	Cela démontre que $\Psi(B_3^\prime)\subseteq B_1$ et d'où $B_3^\prime\subseteq \Psi(B_1)$.
\end{proof}

On a aussi à répéter le Lemme \ref{le:echange}.
\begin{lemme}\label{le:echange2}
Sous l'hypothèse du Lemme \ref{le:echange}, on suppose $B\gg_{u,e,\varepsilon} 1$ de sorte que $\frac{1}{2}+C_1>\tau$ et on définit
 $$S^\prime(\tau,M)=S^\prime(\tau,e,u,\varepsilon,M)=\{(m,n)\in \mathbb{Z}^2: 1\leqslant m<n\leqslant M,\frac{m}{n}\in \mathopen ]\tau,\frac{1}{2}+C_1\mathclose [\},$$
  $$T(\tau,M)=T(\tau,e,u,\varepsilon,M)=\{(x,y)\in\mathbb{R}^2:1\leqslant y< x\leqslant M, \frac{y}{x}\in \mathopen ]\tau,\frac{1}{2}+C_0\mathclose [\}.$$
 Alors on a
 $$\sum_{(m,n)\in S^\prime(\tau,M)\cap H(\alpha_i)}
 (|m|n(m+n))^{-\frac{2}{3}}=\iint_{T(\tau,M)}
 \frac{\operatorname{d}x\operatorname{d}y}{\alpha (xy(x-y))^{\frac{2}{3}}}+O_{u,e,\alpha,\varepsilon}(\log \log B),$$
 $$\sum_{(m,n)\in S^\prime(\tau,M)\cap H(\alpha_i)}
 (m(n-m))^{-1}=\iint_{T(\tau,M)}
 \frac{\operatorname{d}x\operatorname{d}y}{\alpha y(x-y)}+O_{u,e,\alpha,\varepsilon}(\log \log B).$$
\end{lemme}

\begin{proof}[Démonstration de la Proposition \ref{po:pente1}]
 On écrit \og$O$\fg\ pour \og$O_{u,e,\varepsilon,\tau}$\fg. Comme précédemment on considère les deux cas: $b\leqslant DB^{\frac{1}{6}}$ et $b>DB^{\frac{1}{6}}$.
 Pour le premier cas, la démonstration de la Proposition \ref{po:globalcasi} nous fournit le résultat
$$\sum_{\substack{0<-a<b,-\frac{a}{b}>\tau\\\pgcd(a,b)=1,b\leqslant DB^{\frac{1}{6}}\\e_1|a,e_2|b,e_3|a+b}}\sharp 
 F_{u,e_i,\varepsilon}(a,b)=\pi^{-2}Z(\tau,u,e,\varepsilon) B^{\frac{1}{3}}\log B + O_{u,e,\varepsilon,\tau}(B^{\frac{1}{3}}\log \log B).
$$
Pour l'autre cas, grâce à la transformation de Cremona, on obtient d'après le Lemme \ref{le:Cremona3} que
$$\sum_{\substack{0<-a<b,-\frac{a}{b}>\tau\\\pgcd(a,b)=1,b>DB^{\frac{1}{6}}\\e_1|a,e_2|b,e_3|a+b}}\sharp F^\prime_{u,e_i,\varepsilon}(a,b)\leqslant \sum_{\substack{0<-a<b,-\frac{a}{b}<1-\tau\\\pgcd(a,b)=1,b\leqslant EB^{\frac{1}{6}}\\e_3|a,e_2|b,e_1|a+b}}\sharp F^\prime _{u,e_i,\varepsilon}(a,b),$$
$$\sum_{\substack{0<-a<b,-\frac{a}{b}>\tau\\\pgcd(a,b)=1,b>DB^{\frac{1}{6}}\\e_1|a,e_2|b,e_3|a+b}}\sharp F_{u,e_i,\varepsilon}(a,b)\geqslant  \sum_{\substack{0<-a<b,-\frac{a}{b}<1-\tau-2\varepsilon B^{-\frac{1}{3}}\\\pgcd(a,b)=1,b\leqslant EB^{\frac{1}{6}}\\e_3|a,e_2|b,e_1|a+b}}\sharp F^\prime_{u,e_i,\varepsilon}(a,b).$$
Du point de vue du Lemme \ref{le:echange2}, le même raisonnement que la Proposition \ref{po:globalcasi} nous donne
\begin{align*}
 &=\sum_{\substack{0<-a<b,-\frac{a}{b}<1-\tau\\\pgcd(a,b)=1,b\leqslant EB^{\frac{1}{6}}\\e_3|a,e_2|b,e_1|a+b}}\sharp F^\prime_{u,e_i,\varepsilon}(a,b)\\
 &=\sum_{k|u}\frac{\phi(ue)\mu(k)3^{\omega(k)}}{\pi^2 k}\left(\frac{1}{e^{\frac{1}{3}}}\int_{\theta\in \mathopen ]\frac{1}{2}-C_0,1-\tau\mathclose [} \frac{\operatorname{d}\theta}{(\theta(1-\theta))^{\frac{2}{3}}}-\frac{u}{\varepsilon}\int_{\theta\in \mathopen ]\frac{1}{2}-C_0,1-\tau [} \frac{\operatorname{d}\theta}{(\theta(1-\theta))}\right)B^{\frac{1}{3}}\log B\\
 &\ +O(B^{\frac{1}{3}}\log \log B)\\
&=\sum_{k|u}\frac{\phi(ue)\mu(k)3^{\omega(k)}}{ \pi^2 k}\left(\frac{1}{e^{\frac{1}{3}}}\int_{\theta\in \mathopen ]\tau,\frac{1}{2}+C_0\mathclose [} \frac{\operatorname{d}\theta}{(\theta(1-\theta))^{\frac{2}{3}}}-\frac{u}{\varepsilon}\int_{\theta\in \mathopen ]\tau,\frac{1}{2}+C_0 [} \frac{\operatorname{d}\theta}{(\theta(1-\theta))}\right)B^{\frac{1}{3}}\log B\\
&\ +O(B^{\frac{1}{3}}\log \log B)\\
&=\pi^{-2}Z(\tau,u,e,\varepsilon) B^{\frac{1}{3}}\log B + O_{u,e,\varepsilon,\tau}(B^{\frac{1}{3}}\log \log B).
\end{align*}
De l'autre côté, grâce à la symétrie de la transformation de Cremona (Lemme \ref{le:Cremona3}), le même procédé nous fournit
\begin{align*}
&=\sum_{\substack{0<-a<b,-\frac{a}{b}<1-\tau-2\varepsilon B^{-\frac{1}{3}}\\\pgcd(a,b)=1,b\leqslant EB^{\frac{1}{6}}\\e_3|a,e_2|b,e_1|a+b}}\sharp F_{u,e_i,\varepsilon}(a,b)\\
&=\sum_{k|u}\frac{\phi(ue)\mu(k)3^{\omega(k)}}{\pi^2 k}\left(\frac{1}{e^{\frac{1}{3}}}\int_{\theta\in \mathopen ]\frac{1}{2}-C_0,1-\tau-2\varepsilon B^{-\frac{1}{3}}\mathclose [} \frac{\operatorname{d}\theta}{(\theta(1-\theta))^{\frac{2}{3}}}-\frac{u}{\varepsilon}\int_{\theta\in \mathopen ]\frac{1}{2}-C_0,1-\tau-2\varepsilon B^{-\frac{1}{3}} [} \frac{\operatorname{d}\theta}{(\theta(1-\theta))}\right)B^{\frac{1}{3}}\log B\\
&\ +O(B^{\frac{1}{3}}\log \log B)\\
&=\sum_{k|u}\frac{\phi(ue)\mu(k)3^{\omega(k)}}{ \pi^2 k}\left(\frac{1}{e^{\frac{1}{3}}}\int_{\theta\in \mathopen ]\tau,\frac{1}{2}+C_0\mathclose [} \frac{\operatorname{d}\theta}{(\theta(1-\theta))^{\frac{2}{3}}}-\frac{u}{\varepsilon}\int_{\theta\in \mathopen ]\tau,\frac{1}{2}+C_0 [} \frac{\operatorname{d}\theta}{(\theta(1-\theta))}\right)B^{\frac{1}{3}}\log B\\
&\ +O(\log B)+O(B^{\frac{1}{3}}\log \log B)\\
&=\pi^{-2}Z(\tau,u,e,\varepsilon) B^{\frac{1}{3}}\log B + O_{u,e,\varepsilon,\tau}(B^{\frac{1}{3}}\log \log B).
\end{align*}

Cela termine la preuve.
\end{proof}

\begin{proof}[Démonstration du Théorème \ref{th:2}]
 Lorsque $\varepsilon_2-\varepsilon_1$ et $\tau_2-\tau_1$ sont suffisamment petits, d'après la Proposition précédente et (\ref{eq:penteetdistance}) on a
 \begin{align*}
  \frac{\varrho_{B,Q}(\chi_{T_{\varepsilon_1,\varepsilon_2,\tau_1,\tau_2}})}{B^{\frac{1}{3}}\log  B}
  &=\sum_{u,e}\frac{2}{\pi^2}\left(Z(\tau_1,u,e,\varepsilon_2)-Z(\tau_2,u,e,\varepsilon_2)-Z(\tau_1,u,e,\varepsilon_1)+Z(\tau_2,u,e,\varepsilon_1)\right)\\
  &=\sum_{\substack{u,e_i\\\tau_1<\frac{1}{2}+C_0(u,e,\varepsilon_1)}}\frac{2u\phi(ue)}{\pi^2}\sum_{k|u}\frac{\mu(k)3^{\omega(k)}}{ k}\left(\varepsilon_1^{-1}-\varepsilon_2^{-1}\right)\int_{\tau_1}^{\tau_2} \frac{\operatorname{d}\theta}{\theta(1-\theta)}+o(1)\\
  &=\sum_{\substack{u,e\\yx^{-1}<\frac{1}{2}+C_0(u,e,x)}}\frac{2u\phi(ue)3^{\omega(e)}}{\pi^2}\sum_{k|u}\frac{\mu(k)3^{\omega(k)}}{ k}\iint_{\substack{s\in [\varepsilon_1,\varepsilon_2]\\ \frac{t}{s}\in ]\tau_1,\tau_2[}}\frac{\operatorname{d}s \operatorname{d} t}{st(s-t)}+o(1)\\
 &= \sum_{\substack{u,e\\u^3e<st(s-t)}}\frac{2u\phi(ue)3^{\omega(e)}}{\pi^2}\sum_{k|u}\frac{\mu(k)3^{\omega(k)}3^{\omega(e)}}{  k}\iint_{T_{\varepsilon_1,\varepsilon_2,\tau_1,\tau_2}}\frac{\operatorname{d}s \operatorname{d} t}{st(s-t)}+o(1).
 \end{align*}
 Toute fonction continue de support compact est la limite uniforme d'une suite de fonctions intégrables qui sont des combinaisons linéaires des fonctions caractéristiques des ensembles définies au début.
 Cela termine la preuve. 
\end{proof}
\section{\`{A} la recherche des interprétations}
Le résultat révèle qu'il y a des \og seuils\fg $$\{(s,t)\in\mathbb{R}^2_{>0}:st(s-t)=u^3e\}$$ 
correspondant à des discontinuités de la mesure sur le bord de la région $T_{u^3 e}$.
Le nombre de possibilités pour $u,e$ croît avec $x,y$.
On explique maintenant en quoi cela provient d'un phénomène similaire pour $\mathbb{P}^1$.
Rappelons ce que l'on a fait, on compte sur chaque droite et on somme leurs contributions.
Cela équivaut à considérer ce problème sur chaque droite avec la hauteur et la distance induite.
On rappelle le résultat sur la distribution asymptotique pour $\mathbb{P}^1$ (\cite{Pagelot}).
On fixe la distance et la hauteur sur $l$ définies par $$d_l([u:v],[0:1])=\left|\frac{u}{v}\right|,
\quad H_l([u:v])=H_{\mathcal{O}(1)}([u:v])=\max(|u|,|v|), \quad (\pgcd(u,v)=1).$$ 
Il est facile de voir que la constante d'approximation essentielle est $1$ et aucune sous-variété
n'est localement accumulatrice.
On peut donc considérer le même problème d'étudier la distribution locale autour du point $O=[0:1]$.
En identifiant l'espace tangent $T_O \mathbb{P}^1$ de $O$ avec $\mathbb{A}^1$ canoniquement, on veut calculer la mesure asymptotique
de la suite $\{\lambda_B\}$ où 	Pour toute fonction $f$ intégrable à support compact,
$$\lambda_B(f)=\sum_{[u:v]\in F(\varepsilon, B)}f([u:v])$$
avec
$$F(\varepsilon,B)=\{P=[u:v]:d(P,O)<\varepsilon B^{-1},H(P)\leqslant B\}.$$
\begin{theo}[Pagelot]
 On a 
 $$\lambda_B(f)=B\int f d\lambda +o(1),$$
 où la mesure asymptotique est donnée par
 $$\lambda =\frac{\sigma(x)}{x^2}\operatorname{d}x, \quad \sigma(t)=\sum_{n\leqslant t} \varphi(n).$$
\end{theo}
On voit que les seuils pour la mesure asymptotique sont $\{x=n\} ~(n\in\mathbb{N}^*)$.

Revenons à notre cas. On fixe une droite $l:ax+by+c=0$. Le seuil restreint à $l$ est
$$\{(s,t)\in l\cap\mathbb{R}^2_{>0} :st(s-t)=u^3e\}=\{(s,t)\in l\cap\mathbb{R}^2_{>0}:s^3\mu_l(1-\mu_l)=u^3e\}$$
où $\mu_l$ est la pente de $l$ égale à $-ab^{-1}$.
Rappelons que notre paramétrisation est donnée par
$$f_l:[u:v]\longmapsto [ub+v:-ua+v:v].$$
On a $f([0:1])=Q$.
Comme asymptotiquement 
$$H_{\omega_X^{-1}}|_l\sim \frac{e_1e_2e_3}{|ab(a+b)|}H^3_l, $$
$$d|_l=\max(|a|,|b|) d_l,$$
les constantes devant $d_l$ et $H_l$ modifient les seuils si l'on définit la suite de mesures par rapport à $H_{\omega_X^{-1}}|_l$ et $d|_l$. 
D'après (\ref{eq:classique}), on en déduit que la mesure asymptotique dans ce cadre est donnée par
$$\lambda^\prime_l=\frac{1}{b^2(\mu_l(1-\mu_l))}\frac{\sigma_l^\prime(x)}{x^2}\operatorname{d}x,$$
où
$$\sigma^\prime_l(t)=\sum_{u,e:u^3e\leqslant \mu_l(1-\mu_l)t^3}\varphi(ue).$$
Les seuils de $\sigma_l^\prime$  correspondent exactement au bord de $T_{u^3 e}$ restreint à $l$. Quand $B\to\infty$, le seuil \textit{discret} qui est formé par les droites qui interviennent tend vers le seuil \textit{continu} de la mesure asymptotique de la surface $X_3$.

\section{Résultats pour d'autres surfaces toriques}
Notre méthode peut être utilisée pour traiter les surfaces $\mathbb{P}_1, X_1,X_2$ (rappelons que $X_1,X_2$ désignent les surfaces de del Pezzo en éclatant $1$ ou $2$ points généraux dans $\mathbb{P}^2$).
La différence principale entre elles et $X_3$ est la suivante.
Pour $X_3$, pour un voisinage $U$ de $Q$ (de $T_Q X_3$) et $B$ fixés, le nombre des droites $ax+by+cz=0$ qui intervienne dans le comptage
croît linéairement par rapport à $B^{\frac{1}{3}}$. Cela explique que la mesure asymptotique doit avoir une densité.
Alors que pour les surfaces $\mathbb{P}_1, X_1,X_2$, ce nombre est uniformément borné 
\textit{a posteriori}(dépend seulement de $U$ mais indépendant de $B$).
Nous esquissons la preuve pour $X_2$.

Soit $X_2$ l'éclatement de $\mathbb{P}^2$ en $[1:0:0]$ et $[0:1:0]$.
Par un argument similaire à celui de la section 2, la constante d'approximation essentielle pour $Q=[1:1:1]$ est $3$.
Le fibré anticanonique très ample donne un plongement de $X_2$ dans $\mathbb{P}^7$.
Une hauteur de Weil absolue du point $P=[x:y:z]$ en dehors des diviseurs exceptionnels est donnée par
$$H(P)=H_{\omega_{X_2}^{-1}}(P)=\max(|x|,|y|,|z|)\frac{\max(|y|,|z|)\max(|x|,|z|)}{\pgcd(y,z)\pgcd(x,z)}.$$
Les variétés localement accumulatrice que l'on doit supprimer sont les droites $$l_1=\{y=z\},\quad l_2=\{x=z\}.$$
On identifie l'espace tangent $T_Q X_2$ avec $(z\neq 0)\simeq\mathbb{A}^2$ et on étudie la distribution locale dans la région $$\bar{R}=\{(s,t)\in\mathbb{R}:s,t>1\}.$$
En introduisant la même paramétrisation pour les droites, on se ramène au problème de comptage
$$\varrho_{B,Q}(\chi_\varepsilon)=\sum_{\substack{0<-a,b\\\pgcd(a,b)=1}} \sharp J_B(a,b)$$
avec (pour $0<-a\leqslant b$)
\begin{align}
 J_B(a,b) =\left\{  
 \begin{aligned}
 & (u,v)\in \mathbb{N}^2,\\
 &\pgcd(u,v)=1
 \end{aligned}
 \left| 
 \begin{aligned}
  &\frac{u}{v}\leqslant\varepsilon B^{-\frac{1}{3}}b^{-1}\\
  &(ub+v)^2(-ua+v)\leqslant Bd_1d_2
 \end{aligned}
\right\} \right.
 \end{align}
 On note $$e_1=\frac{a}{d_1},\quad e_2=\frac{b}{d_2},\quad u=d_4=\pgcd(x-z,y-z).$$
 On a pour $0<-a\leqslant b$,
 $$b\leqslant \varepsilon B^{-\frac{1}{3}}vu^{-1}\leqslant \varepsilon u^{-1}(d_1d_2)^{\frac{1}{3}}\leqslant \varepsilon b^{\frac{2}{3}}u^{-1}(e_1e_2)^{-\frac{1}{3}}.$$
 Alors
 $$bu^3e_1e_2\leqslant \varepsilon ^3.$$
 On voit que non seulement $e_1,e_2,u$ sont bornées, mais $b$ (et \textit{a fortiori} $a$) le sont aussi.
 
Étant donnés $u,a,b,e_i$, à une inversion de Möbius près, l'ensemble de $v$ dans $J_B$ est l'intersection avec un réseau de l'intervalle
 $$\mathopen [u\varepsilon^{-1} B^{\frac{1}{3}},(Bd_1d_2)^{\frac{1}{3}}-ub C^\prime _{a,b,e_i,B,\varepsilon}\mathclose ].$$
 Puisque l'ensemble des paires $(a,b)$ telles que cette intervalle soit non-vide est \textit{fini} uniformément, 
 le terme principal va avoir la grandeur $B^{\frac{1}{3}}$ et 
 le terme d'erreur peut être contrôlé facilement.
 Donc on retrouve le résultat que Pagelot avait énoncé dans \cite{Pagelot}.
 \begin{theo}
  Soit $Y$ une surface de del Pezzo de degré $\geqslant 7$. Alors pour toute fonction $f$ intégrable à support compact, on a 
  $$\int f\operatorname{d}\varrho_{B,Q}=B^{\frac{1}{3}}\int f \operatorname{d}\varrho_{Y,Q}+o(1),$$ où la mesure asymptotique est donnée par
  $$\varrho_{Y,Q}=\sum_{\substack{l\ni Q\\(\omega_X^{-1}\cdot l)=3}} \lambda_l,$$
  où $\lambda_l$ est une mesure asymptotique sur la droite $l$ (ayant des seuils et des valeurs différents selon le degré de $Y$, voir section 6.1).
  De plus, les termes $\lambda_l(f)$ sont presque tous nuls.
 \end{theo}
La grandeur de ces trois exemples est $B^{\frac{1}{3}}$, qui est inférieure à celle de $X_3$, ce qui est sous-entendu parce que plus on éclate des points, plus la hauteur diminue, ainsi plus les points s'accumulent. De plus, la mesure asymptotique, contrairement à celle de $X_3$, n'a pas une densité continue (par rapport à la mesure de Lebesgue de $\mathbb{R}^2$ dans $T_Q Y$).

\section*{Remerciement.} Je tiens à remercier Emmanuel Peyre pour ses conseils éclairants et corrections attentives, qui ont beaucoup amélioré la clarté de ce texte.

 \end{document}